\documentclass[11pt,a4paper]{article}
\usepackage{amsmath,amsthm,amssymb,amsfonts}
\usepackage{hyperref}
\usepackage{enumitem}

\pagestyle{myheadings}

\setlength{\topmargin}{-.15in}
\setlength{\textheight}{8.7in}
\setlength{\textwidth}{6.5in}
\setlength{\oddsidemargin}{0in}

\setlength{\footskip}{1.1\footskip}

\setlength{\skip\footins}{8mm}

\addtolength{\footnotesep}{1mm}

\newcommand{\C}{{\mathbb{C}}}          
\newcommand{\F}{{\mathbb{F}}}          
\newcommand{\Hamil}{{\mathbb{H}}}      
\newcommand{\N}{{\mathbb{N}}}          
\newcommand{\Octoni}{{\mathbb{O}}}     
\newcommand{\Proj}{{\mathbb{P}}}        
\newcommand{\Z}{{\mathbb{Z}}}          

\newcommand{\Gdois}{{\mathrm{G}_2}}

\newcommand{\SO}{{\mathrm{SO}}}

\newcommand{\rr}{\rightarrow}
\newcommand{\lrr}{\longrightarrow}

\newcommand{\calf}{{\cal F}}             %
\newcommand{\call}{{\cal L}}             %
\newcommand{\calp}{{\cal P}}             %
\newcommand{\calx}{{\cal X}}             %

\newcommand{\na}{{\nabla}}

\newcommand{\cotg}{{\mathrm{cotg}}\,}

\newcommand{\dx}{{\mathrm{d}}}
   
\newcommand{\pr}{{\mathrm{P}}}   
\newcommand{\pfaff}{{\mathrm{Pf}}}

\newcommand{\inv}[1]{{#1}^{-1}}
\newcommand{\papa}[2]{\frac{\partial#1}{\partial#2}}

\newcommand{\vol}{{\mathrm{vol}}}

\newtheorem{teo}{Theorem}[section]

\newtheorem{prop}{Proposition}[section]

\newtheorem{defi}{Definition}[section]

\newenvironment{Rema}[1][Remark.]{\begin{trivlist}
\item[\hskip \labelsep {\bfseries #1}]}{\end{trivlist}}

\def\cyclic{\mathop{\kern0.9ex{{+}
\kern-2.2ex\raise-.28ex\hbox{\Large\hbox{$\circlearrowright$}}}}\limits}

\title{The Euler characteristic of hypersurfaces in space forms and applications to isoparametric hypersurfaces}

\author{R. Albuquerque}

\begin{document}

\maketitle

\begin{abstract}

We revisit Allendoerfer-Weil's formula for the Euler characteristic of 
embedded hypersurfaces in constant sectional curvature manifolds, first 
taking some time to re-prove it while demonstrating techniques of 
\cite{Alb2012} and then applying it to gain new understanding of 
isoparametric hypersurfaces.
\end{abstract}

\ 
\vspace*{5mm}\\
{\bf Key Words:} Euler characteristic, hypersurface, mean curvatures, space form, isoparametric hypersurface.
\vspace*{2mm}\\
{\bf MSC 2010:} Primary:  53C35,  57R20; Secondary: 53C17, 58A15

\vspace*{10mm}

\markright{\sl\hfill  R. Albuquerque \hfill}

\setcounter{section}{1}

\begin{center}
 \textbf{1. Allendoerfer-Weil's formula}
\end{center}
\setcounter{section}{1}

Let $(N,g)$ denote an oriented $C^2$ Riemannian $(n+1)$-dimensional manifold.

By a hypersurface $f:M\hookrightarrow N$ we refer to a closed orientable 
$C^2$ embedded $n$-dimensional Riemannian submanifold $M$ of $N$. The second 
fundamental form $A=\na_{\cdot}\vec{n}$, where $\vec{n}$ denotes a unit 
vector field normal to $M$, becomes a class $C^0$ endomorphism of $TM$. The 
$i$th-mean curvature $H_i$, $1\leq i\leq n$, is defined by
\begin{equation}
 H_i=\frac{1}{\binom{n}{i}} \sum_{1\leq j_1<\cdots<j_i\leq n}\lambda_{j_1}\cdots\lambda_{j_i} 
\end{equation}
where $\lambda_1,\ldots,\lambda_n$ are the eigenvalues of $A$, the so-called principal curvatures of $M$. In other words, $\binom{n}{i}H_i=S_i$ is the elementary symmetric polynomial of degree $i$ on the $\lambda_j$. We let $H_0=1$. One may define the invariant $S_i$ by the generating polynomial $\det(1+xA)=\sum_{i=0}^n S_ix^i$.

Now let us suppose $n$ is even, say $n=2l\in\N$, and that $N=N(c)$ is an 
$(n+1)$-dimensional manifold of constant sectional curvature $c$.

In this article we start, right from this first section, by revisiting the 
C.~B.~Allendoerfer and A.~Weil's formula \cite[formula 19]{Allendoerfer} 
for the Euler or Euler-Poincar\'e characteristic of an embedded hypersurface 
$M$ in $N$. According to Allendoerfer, the next result is proved in 
\cite{Allen-Weil}:
\begin{equation} \label{CaractEuler_intro}
 \chi(M) \:=\: \frac{1}{2^l\pi^l}\sum_{p=0}^{l}
 (2l-2p-1)!!\,(2p-1)!!\,c^{l-p}\int_M S_{2p}\,\vol .
\end{equation}
We give two new proofs of the invariance of the previous quantity under 
$C^2$ deformation of $M$. One proof, still in Section 1, is based on a 
formula of R.~Reilly in \cite{Rei} for the derivative $\frac{\dx}{\dx 
t}\int_{M_t}S_i\,\vol_t$, where of course the $S_i$ vary with $M_t$. Another 
proof, in Sections 2 and 3, is found as an application both of the theory of 
Euler-Lagrange systems from \cite{BGG} and the fundamental exterior 
differential system of Riemannian geometry, which was found by the author in 
\cite{Alb2012} and which, in particular, proves Reilly's formula. In Section 
4 we confirm with the Theorem of Gauss-Bonnet-Chern that the referred 
invariant is indeed $\chi(M)$.

In Sections 5 and 6 we explore some of the further results stated in 
\cite{Allendoerfer}, which are all quite important to recall.

Finally, in Section 7, we establish new formulas between the 
volume, the Euler characteristic and the constants $\lambda_i$ of certain 
isoparametric hypersurfaces. These are indeed original applications of 
Allendoerfer-Weil's formula.

We proceed immediately with the announced research plan.
\begin{defi}
 Let $\pr$ denote the function on $M$, dependent on the 2nd fundamental form,
 \begin{equation} \label{thedifferentialinvariant}
 \begin{split}
 \pr &  = \pr(\lambda_1,\ldots,\lambda_n)  \\
 & = (2l-1)!!\,c^{l}+(2l-3)!!\,c^{l-1}\,S_2+\cdots +   \\
 &\hspace{34mm} (2l-2p-1)!!\,(2p-1)!!\,c^{l-p}\,S_{2p}
+\cdots+(2l-1)!!\,S_{2l} \\
 & = (2l-1)!! \sum_{p=0}^{l}\binom{l}{p}c^{l-p}H_{2p} 
 \end{split}
 \end{equation}
where $(2p-1)!!=(2p-1)(2p-3)\cdots5\cdot3\cdot1$ and $(-1)!!=1$.
\end{defi}

An alternative expression for $\pr$ in \eqref{thedifferentialinvariant} uses 
the trivial identity $(2p)!/p!=2^p(2p-1)!!$.

\begin{teo}        \label{TheoremInvariantbyDeformation}
Let $f_t:M\rr N(c)$ denote a $C^2$ deformation of a hypersurface $f_0:M\rr 
N(c)$ with empty boundary. Then, up to a scalar multiple, $\pr$ is the only 
constant coefficients linear combination on the $S_i$ which yields
 \begin{equation}
   \frac{\dx}{\dx t}\int_{M_t}\pr\,\vol_t = 0 .
 \end{equation}
\end{teo}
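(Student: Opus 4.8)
The plan is to translate the statement into an elementary fact about the coefficients $a_i$ by means of R.~Reilly's first-variation formula \cite{Rei}. If $\psi=\langle\partial_t f_t,\vec n\rangle$ denotes the normal speed of the deformation, Reilly's formula yields, for a closed $M$ (so that the divergence terms it contains integrate to zero, using $\partial M=\emptyset$),
\[
  \frac{\dx}{\dx t}\int_{M_t} S_i\,\vol_t \;=\; \int_M \bigl[(i+1)\,S_{i+1} - (n-i+1)\,c\,S_{i-1}\bigr]\,\psi\,\vol ,
\]
with the conventions $S_{-1}=S_{n+1}=0$. First I would feed an arbitrary constant-coefficient combination $Q=\sum_{i=0}^{n}a_i S_i$ into this identity, interchange the sum with the integral, and re-index the two resulting sums so that the integrand appears as $\bigl(\sum_{j=0}^{n}b_j S_j\bigr)\psi$ with
\[
  b_j \;=\; j\,a_{j-1}-(n-j)\,c\,a_{j+1},
\]
where $a_{-1}:=0$ (the value of $a_{n+1}$ is irrelevant, being multiplied by $n-n=0$). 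Thus $\frac{\dx}{\dx t}\int_{M_t}Q\,\vol_t$ equals $\int_M\bigl(\sum_j b_j S_j\bigr)\psi\,\vol$.

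Next I would argue that $\frac{\dx}{\dx t}\int_{M_t}Q\,\vol_t=0$ for \emph{every} deformation of \emph{every} closed hypersurface $M\hookrightarrow N(c)$ forces $b_j=0$ for all $j$. Since $\psi$ ranges over all $C^0$ functions on a given $M$, the vanishing of the integral for all $\psi$ already gives $\sum_j b_j S_j\equiv 0$ on $M$. Now $S_0,\dots,S_n$ are linearly independent as polynomials in the principal curvatures (they have pairwise distinct degrees), and on a suitable hypersurface the shape operator at a prescribed point can be arranged to have arbitrary eigenvalues — e.g.\ in a normal chart of $N(c)$ take a graph with prescribed Hessian at that point and cap it off to a closed hypersurface. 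Hence $\sum_j b_j S_j\equiv 0$ on all such $M$ is possible only if every coefficient $b_j$ vanishes. This is the step I expect to be the only delicate one: it requires both that the divergence terms suppressed from Reilly's formula really integrate to zero (true because $M$ is closed, here using the empty-boundary hypothesis) and that arbitrary second fundamental forms are pointwise realizable by genuine compact hypersurfaces of $N(c)$.

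It then remains to solve the system $b_j=0$, that is, $j\,a_{j-1}=(n-j)\,c\,a_{j+1}$ for $1\le j\le n-1$, together with $a_1=0$ (from $b_0=0$) and $n\,a_{n-1}=0$ (from $b_n=0$). I would note that the recursion only couples coefficients of equal parity: starting from $a_1=0$ and applying it at $j=2,4,\dots,2l-2$ forces all odd-indexed $a_i$ to vanish (the constraint $a_{n-1}=0$ being then automatic), whereas for the even indices $a_0,a_2,\dots,a_{2l}$ one obtains $l$ mutually independent relations among $l+1$ unknowns, so the solution space is exactly one-dimensional. Finally I would verify directly that the coefficients $a_{2p}=(2l-2p-1)!!\,(2p-1)!!\,c^{\,l-p}$ of $\pr$ in \eqref{thedifferentialinvariant} do satisfy $(2p+1)\,a_{2p}=(2l-2p-1)\,c\,a_{2p+2}$, using $(2l-2p-1)(2l-2p-3)!!=(2l-2p-1)!!$ and $(2p+1)!!=(2p+1)(2p-1)!!$; therefore $\pr$ spans that one-dimensional space and is, up to a scalar multiple, the only such combination. (For $c=0$ the recursion degenerates to $a_0=\dots=a_{n-1}=0$ with $a_n$ free, and indeed $\pr$ then collapses to $(2l-1)!!\,S_{2l}$, in agreement with the conclusion.)
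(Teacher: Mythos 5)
Your proposal is correct and takes essentially the same route as the paper: the paper's own proof is the one-line ``immediate application of Reilly's formula'', and the coefficient recursion $j\,a_{j-1}-c(n-j)\,a_{j+1}=0$ that you set up and solve is exactly the system the author treats again in Proposition \ref{Prop_dLambda_vanishingcases}, with the same parity splitting and the same double-factorial solution giving $\pr$. Your extra care on the uniqueness step (the fundamental lemma in the normal speed $\psi$ plus the pointwise realizability of arbitrary principal curvatures by closed hypersurfaces of $N(c)$) supplies details that the paper leaves implicit, and is indeed needed for the ``only'' part of the statement.
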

\begin{proof}
Immediate application of Reilly's formula (a) in \cite[Theorem B]{Rei}, 
valid for all $t$:
 \begin{align}   \label{formulaReilly}
  \frac{\dx}{\dx t}\int_{M_t}S_i\,\vol_t &=
     \int_{M_t}g(X,\vec{n})((i+1)S_{i+1}-c(n-i+1)S_{i-1})\,\vol_t ,   
 \end{align}
for any $0\leq i\leq n$, where $X=\papa{f_t}{t}$ is the deformation vector field.
\end{proof}

We thus have the integral invariant
\begin{equation} \label{CaractEuler}
 \chi(M) = \frac{1}{2^l\pi^l}\int_M \pr\,\vol = \frac{1}{2^l\pi^l} 
     \sum_{p=0}^{l} (2l-2p-1)!!\,(2p-1)!!\,c^{l-p}\int_M S_{2p}\,\vol .
\end{equation}
We shall see below that the expression on the right hand side is indeed 
the Euler-Poincar\'e characteristic $\chi(M)$ of $M$. In other words, that 
$\frac{1}{2^l\pi^l}\pr\,\vol_M$ is the $n$-form found by the Theorem of 
Gauss-Bonnet-Chern. Thus we should not have to show that \eqref{CaractEuler} 
is a constant under deformations, leaving aside the uniqueness statement.

Formula \eqref{CaractEuler} is due to C.~Allendoerfer and A.~Weil, 
according to the first in \cite{Allendoerfer} who points the reader to 
\cite{Allen-Weil}. But indeed we can only find it explicit in 
\cite[formula 19]{Allendoerfer}. The result has also appeared in other 
references, deduced through different means. Namely, a proof is given by 
G.~Solanes in \cite{Solanes} using methods of integration over the
intersections of $M$ with \textit{certain} {planes}. Interesting enough, 
Allendoerfer-Weil's formula is shown in the celebrated book of L.~Santal\'o 
\cite[p.303]{Santalo1} and the formula of R.~Reilly is deduced by Santal\'o 
\cite{Santalo2} for parallel deformations of the given hypersurface.

\vspace{2mm}
\begin{center}
 \textbf{2. Variational calculus with the tangent sphere bundle}
\end{center}
\setcounter{section}{2}

The theory of a fundamental exterior differential system ``$\theta,\alpha_0,\ldots,\alpha_n$'' of Riemannian geometry was first presented in \cite{Alb2012}. Applications in various fields soon emerged, some also related with the integral of mean curvatures of hypersurfaces, cf. \cite{Alb2012,Alb2015a,Alb2018}.

Again we let $(N,g)$ denote an oriented Riemannian $(n+1)$-dimensional manifold of class $C^2$. We consider the unit tangent sphere bundle $\pi:SN\lrr N$ with its natural Sasaki metric and induced $\SO(n)$ structure. The metric gives the horizontal tangent subbundle and the geodesic flow vector field; its dual is the most fundamental contact 1-form on $SN$, denoted by $\theta$.

We recall the mirror endomorphism $B:TTN\rr TTN$, which sends a horizontal lift of a vector field on $N$ to the respective vertical lift and sends verticals to 0. $B$ is indeed an endomorphism and an important object in the definition of the $\alpha_i\in\Omega^n_{SN}$, $i=0,\ldots,n$. Letting $\alpha_n$ be the volume-form of the fibers, we recall
\begin{equation}
  \alpha_i \:=\: \frac{1}{i!(n-i)!}\,\alpha_n\circ(B^{n-i}\wedge1^i) ,
\end{equation}
with $\circ$ denoting a natural alternating operator. It is also useful to define $\alpha_{-1}=\alpha_{n+1}=0$ and to recall the identity $\dx\theta\wedge\alpha_i=0$, for all $i=0,\ldots,n$.

Now, let $f:M\rr N$ denote an oriented hypersurface. On $M$ we have the 
canonical lift $\hat{f}:M\hookrightarrow SN$, defined by 
$\hat{f}(x)=\vec{n}_{f(x)}$. We have the following formulas, cf. 
\cite[Proposition 3.2]{Alb2012}:
\begin{equation} \label{fundamentalformulapullback}
\begin{split}
 &\qquad\hat{f}^*\theta=0, \\
 &\hat{f}^*{\alpha_i}= S_i\,\vol_M .
\end{split}
\end{equation}

As in \cite{Alb2012,Alb2018,BGG}, we now consider a Lagrangian 
$\Lambda\in\Omega_{SN}^n$. This is just any given $n$-form on $SN$. Then we 
may develop the variations of the following functional on $M$
\begin{equation}\label{calF_Lambda}
 \calf_\Lambda(M_t) := \int_{M}\hat{f_t}^*\Lambda .
 \end{equation}

\begin{Rema}
Let $X$ be any vector field on the base space $N$ and let $\{\phi_t:N\rr N\}$ denote its flow. Then the flow of the horizontal lift $\pi^*X$ of $X$ on the unit tangent sphere bundle is rather difficult to find. A rather natural object to consider is the complete lift or extension of $X$ onto $TN$. Nevertheless we shall observe some results where horizontal lifts are just helpful enough.
\end{Rema}

The vector field $X_t=\papa{f_t}{t}$ is related with the derivative of \eqref{calF_Lambda}. We shall look for variations of $M$ with the boundary fixed. Hence $X$ vanishes on $\partial M$. The derivative requires us to find the Poincar\'e-Cartan form $\theta\wedge\Psi_1$ associated to $\dx\Lambda$, this is, the forms $\theta\wedge\Psi_1\in\Omega^{n+1}$ and $\beta_1\in\Omega^{n-1}$ such that $\dx\Lambda=\theta\wedge\Psi_1+\dx(\theta\wedge\beta_1)$ --- a decomposition assured to exist by a Theorem of Lepage, \cite{BGG}. We then find
\begin{equation}  \label{thevariationalderivative}
\begin{split}
 \lefteqn{\boldsymbol{\delta}_{_{X_t}}\calf_\Lambda(M_t)= \frac{\dx}{\dx t}\int_{M}\hat{f_t}^*(\Lambda-\theta\wedge\beta_1)  
   =\int_M\hat{f_t}^*\call_{\calx_t}(\Lambda-\theta\wedge\beta_1) } \hspace*{45mm} \\ 
& = \int_M\hat{f_t}^*\bigl(\dx({\calx_t}\lrcorner(\Lambda-\theta\wedge\beta_1)) + {\calx_t}\lrcorner(\theta\wedge\Psi_1)\bigr) \\ 
& =\int_{\partial M}\hat{f_t}^*{\calx_t}\lrcorner(\Lambda-\theta\wedge\beta_1)+ \int_M\hat{f_t}^*(\theta(\calx_t)\,\Psi_1) .
 \end{split}
 \end{equation}
Notice we must take $\calx_t=\papa{\hat{f_t}}{t}$ for the Lie derivative. But the horizontal part $\calx_t^h$ is the horizontal lift of $X_t$. Easy enough, $\pi_*(\calx_t) =\papa{ }{t}\pi\circ\hat{f_t}
 =\papa{f_t}{t}=X_t$. Since, by definition, 
$\theta_u(Z)=g_{\pi(u)}(\pi_*Z,u)$ for all $u\in SN$ and $Z\in TSN$, we 
find
 \[  \hat{f_t}^*(\theta(\calx_t))=(\theta(\calx_t))_{\hat{f_t}} 
=g(X_t,\vec{n}).    \]
\begin{prop} \label{stationarysubmanifold_iff_condition}
 For a hypersurface $M\subset N$ to be a stationary submanifold of the functional $\calf_\Lambda$, under all variations with fixed boundary $\partial M$ and fixed unit normal to the boundary, it is necessary and sufficient that $\hat{f}^*\Psi_1=0$.
\end{prop}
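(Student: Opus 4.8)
The plan is to read the statement straight off the variational identity \eqref{thevariationalderivative}, which has already carried out the main analytic work; what remains is to kill the boundary term and invoke the fundamental lemma of the calculus of variations, just as in the usual first-variation analysis of the Euler--Lagrange system attached to $\Lambda$. First I would argue that the two hypotheses on the admissible variations --- $\partial M$ fixed and the unit normal $\vec{n}$ along $\partial M$ fixed --- force the boundary integral on the last line of \eqref{thevariationalderivative} to vanish. Indeed $\calx_t=\papa{\hat{f_t}}{t}$ and $\hat{f_t}(x)=\vec{n}_{f_t(x)}$, so if for each $x\in\partial M$ both $f_t(x)$ and the value of $\vec{n}$ at $x$ are independent of $t$, then $\hat{f_t}(x)$ is independent of $t$ and hence $\calx_t$ vanishes on $\partial M$; consequently $\hat{f_t}^*\bigl(\calx_t\lrcorner(\Lambda-\theta\wedge\beta_1)\bigr)=0$ there. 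Combining this with $\hat{f_t}^*\theta=0$ and the identity $\hat{f_t}^*(\theta(\calx_t))=g(X_t,\vec{n})$ recorded just above the statement, \eqref{thevariationalderivative} collapses to
\[
 \boldsymbol{\delta}_{_{X_t}}\calf_\Lambda(M_t)=\int_M g(X_t,\vec{n})\,\hat{f_t}^*\Psi_1 .
\]

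Next I would note that $\hat{f}^*\Psi_1$ is a top-degree form on the $n$-manifold $M$, hence equals $\psi\,\vol_M$ for a continuous function $\psi$, so the derivative at $t=0$ reads $\int_M g(X,\vec{n})\,\psi\,\vol_M$. Sufficiency is then immediate: if $\hat{f}^*\Psi_1=0$, i.e. $\psi\equiv0$, this vanishes for every admissible $X$ and $M$ is stationary. For necessity, assuming $M$ stationary, I would test against normal variations: given any $C^2$ function $\phi$ on $M$ supported away from $\partial M$, extend $\phi\,\vec{n}$ to a vector field on $N$ with support disjoint from $f(\partial M)$ and let $f_t=\varphi_t\circ f$, where $\varphi_t$ is its flow. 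This is an admissible variation with $g(X,\vec{n})=\phi$, so $\int_M\phi\,\psi\,\vol_M=0$ for all such $\phi$; the fundamental lemma then gives $\psi=0$ on the interior of $M$, hence everywhere by continuity, that is $\hat{f}^*\Psi_1=0$.

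The one point I would take care to justify, and the only place a real obstacle could hide, is the realisability of those normal variations: one must verify that every interior-supported $\phi$ on $M$ genuinely arises as $g(X,\vec{n})$ for a deformation respecting both boundary constraints, which is exactly why $\phi\,\vec{n}$ is extended in a tube disjoint from $f(\partial M)$, so that the flow fixes $\partial M$ pointwise together with the germ of $M$ --- and hence the unit normal --- along $\partial M$. I would also remark in passing that $\hat{f}^*\Psi_1$ depends only on $f$ and on the orientation-determined normal $\vec{n}$, not on any auxiliary extension or on $t$, so the vanishing obtained is a bona fide pointwise condition on $M$, as the statement asserts. Beyond this bookkeeping, nothing is needed that is not already contained in \eqref{thevariationalderivative}.
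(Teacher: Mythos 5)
Your argument is correct and is essentially the paper's own reasoning: the proposition is stated as a direct consequence of \eqref{thevariationalderivative}, with the boundary constraints interpreted (as in the remark following the statement) precisely as ${\calx_t}_{\mid\partial M}=0$, which kills the boundary integral and leaves $\int_M g(X_t,\vec{n})\,\hat{f_t}^*\Psi_1$. Your only additions — invoking the fundamental lemma of the calculus of variations and checking that interior-supported normal variations are admissible — are standard details the paper leaves implicit, and they are handled correctly.
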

Notice the hypothesis ${\calx_t}_{\mid\partial M}=0$. These are deformations which leave $\partial M$ strongly fixed in the sense that both $X_t$ and its gradient vanish on $\partial M$ (as in \cite[p.467]{Rei}). Of course, if the boundary is empty, then there is no restriction on the deformation.
 
\begin{Rema}
 On the assumptions of Proposition \ref{stationarysubmanifold_iff_condition}, we may compute the second derivative:
\begin{equation}
  \boldsymbol{\delta}^2_{_{X_t}}\calf_\Lambda(M_t)
   = \int_M(\frac{\dx}{\dx t} g(X_t,\vec{n}))\hat{f_t}^*\Psi_1+g(X_t,\vec{n})\hat{f_t}^*\call_{\calx_t}\Psi_1 .
 \end{equation}
If $\theta\wedge\Psi_2$ is the new Poincar\'e-Cartan form, $\dx\Psi_1\equiv\theta\wedge\Psi_2+\dx(\theta\wedge\beta_2)$, then on the stationary $M=M_0$ we obtain
\begin{equation} \label{2nd_deri_calF_sta}
  \boldsymbol{\delta}^2_{_{X}}\calf_\Lambda(M)\,_{|t=0}
   = \int_Mg(X,\vec{n})\hat{f}^*\bigl(\dx(\calx\lrcorner\Psi_1)+\calx\lrcorner\dx(\theta\wedge\beta_2)\bigr)+(g(X,\vec{n}))^2\hat{f}^*\Psi_2 .
 \end{equation}
\end{Rema}

\vspace{2mm}
\begin{center}
 \textbf{3. Alternative proof of Theorem \ref{TheoremInvariantbyDeformation}}
\end{center}
\setcounter{section}{3}

Let us return to the case of an oriented Riemannian $(n+1)$-dimensional 
manifold of constant sectional curvature $N=N(c)$ and let us study the 
functional $\calf_{\alpha_i}$. We have the following \textit{magic} formula 
from \cite[formula 2.26]{Alb2012}:
\begin{equation}\label{derivadasalpha_iemCSecC}
   \dx\alpha_i=
     \theta\wedge\bigl((i+1)\alpha_{i+1}-c(n-i+1)\alpha_{i-1}\bigr) .
\end{equation}
These are already in the Poincar\'e-Cartan form. Combining with 
(\ref{fundamentalformulapullback},\ref{thevariationalderivative}), yields 
immediately formula (a) of R.~Reilly in \cite[Theorem B]{Rei} which we have
used in \eqref{formulaReilly}.

Furthermore, a straightforward application of \eqref{2nd_deri_calF_sta} 
improves on the second part of that theorem, formula (b). Indeed, from 
above we have $\dx\alpha_i=\theta\wedge\Psi_1$ and thus 
$\dx\Psi_1=\theta\wedge\Psi_2$ where
\begin{equation}
\Psi_2=(i+1)(i+2)\alpha_{i+2}
-c(i(n-i+1)+(i+1)(n-i))\alpha_i+c^2(n-i+1)(n-i+2)\alpha_{i-2}.
\end{equation}
Hence the following result.
\begin{prop}
Under all variations with fixed boundary $\partial M$ and fixed unit normal 
to the boundary, at a stationary submanifold of $\calf_{\alpha_i}$ we have
\begin{equation}  \label{thevariationalderivativeReillyPartB}
 \boldsymbol{\delta}^2_{_{X}}\calf_{\alpha_i}(M_t)\,_{|t=0} =\left. 
\frac{\dx^2}{\dx t^2}\right\vert_{t=0}\,\int_{M}S_i\,\vol_t 
=\int_Mg(X,\vec{n})\hat{f}^*\dx(\calx\lrcorner\Psi_1)+ 
 g(X,\vec{n})^2\hat{f}^*\Psi_2 .
\end{equation}
\end{prop}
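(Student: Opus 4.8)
The plan is to deduce the displayed formula directly from the second-derivative identity \eqref{2nd_deri_calF_sta} established in the previous Remark, specialized to the Lagrangian $\Lambda=\alpha_i$ on $N(c)$. First I would recall, from the magic formula \eqref{derivadasalpha_iemCSecC}, that $\dx\alpha_i=\theta\wedge\Psi_1$ with $\Psi_1=(i+1)\alpha_{i+1}-c(n-i+1)\alpha_{i-1}$ already in Poincar\'e--Cartan form, so that $\beta_1=0$. Iterating, $\dx\Psi_1=\theta\wedge\Psi_2$ with the $\Psi_2$ written just above the statement, so again $\beta_2=0$. Hence the generic second-variation formula \eqref{2nd_deri_calF_sta} collapses, on a stationary $M$ for $\calf_{\alpha_i}$, to
\[
 \boldsymbol{\delta}^2_{_{X}}\calf_{\alpha_i}(M)\,_{|t=0}
   = \int_M g(X,\vec{n})\,\hat{f}^*\bigl(\dx(\calx\lrcorner\Psi_1)\bigr) + \bigl(g(X,\vec{n})\bigr)^2\hat{f}^*\Psi_2 ,
\]
which is exactly the right-hand side claimed.

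Second, I would justify the middle equality, namely that $\boldsymbol{\delta}^2_{_{X}}\calf_{\alpha_i}(M)$ really equals $\frac{\dx^2}{\dx t^2}\big|_{t=0}\int_M S_i\,\vol_t$. This uses the pullback identities \eqref{fundamentalformulapullback}, $\hat{f_t}^*\alpha_i=S_i\,\vol_t$ and $\hat{f_t}^*\theta=0$: the functional $\calf_{\alpha_i}(M_t)=\int_M\hat{f_t}^*\alpha_i=\int_{M_t}S_i\,\vol_t$, and since $\beta_1=0$ the variational bookkeeping in \eqref{thevariationalderivative} shows the first variation agrees with the naive derivative of $\int_{M_t}S_i\,\vol_t$; differentiating once more and invoking stationarity ($\hat f^*\Psi_1=0$ by Proposition \ref{stationarysubmanifold_iff_condition}) kills the term with $\frac{\dx}{\dx t}g(X_t,\vec n)$ at $t=0$, leaving the quoted expression.

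The main subtlety — not really an obstacle, but the point that needs care — is the behavior at the boundary: the derivation of \eqref{thevariationalderivative} and \eqref{2nd_deri_calF_sta} produced a boundary term $\int_{\partial M}\hat{f_t}^*(\calx_t\lrcorner(\Lambda-\theta\wedge\beta_1))$, and one must confirm it vanishes here. Since $\beta_1=0$ the boundary integrand is $\calx\lrcorner\alpha_i$ pulled back, and under the standing hypothesis that the deformation leaves $\partial M$ strongly fixed (both $X$ and its gradient vanish on $\partial M$, as in Proposition \ref{stationarysubmanifold_iff_condition}) this restricts to zero; likewise the analogous boundary contribution to the second variation drops out. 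With $\beta_1=\beta_2=0$ everything else is a direct substitution into \eqref{2nd_deri_calF_sta}, so the proof is essentially a specialization argument rather than a new computation.
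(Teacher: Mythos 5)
Your proposal is correct and matches the paper's own argument, which is exactly the ``straightforward application'' of \eqref{2nd_deri_calF_sta} to $\Lambda=\alpha_i$ using the magic formula \eqref{derivadasalpha_iemCSecC} (so $\beta_1=\beta_2=0$) together with the computed $\Psi_2$ and the pullback identities \eqref{fundamentalformulapullback}. Your extra remarks on the vanishing boundary term and on stationarity killing the $\frac{\dx}{\dx t}g(X_t,\vec{n})$ term only make explicit what the paper leaves implicit.
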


Now any non-trivial Lagrangian $\Lambda$ which satisfies $\dx\Lambda=0$ 
gives rise to a \textit{conservation law}: here, an integral invariant of 
hypersurfaces under $C^2$ deformations. Thus we consider a form
\begin{equation}
 \Lambda=b_0\alpha_0+\cdots+b_n\alpha_n
\end{equation}
with constant coefficients $b_j$. Such a form is an invariant of the geodesic flow vector field $S=\theta^\sharp$.
\begin{prop}
 If $\dx\Lambda=\theta\wedge\Lambda_1$, then $\Lambda_1=\call_S\Lambda$.
\end{prop}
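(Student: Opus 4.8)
The plan is to derive the identity from Cartan's magic formula $\call_S=\dx\circ(S\lrcorner\,)+(S\lrcorner\,)\circ\dx$, using two structural facts about the system $\theta,\alpha_0,\ldots,\alpha_n$. First, $\theta(S)=1$: since $S=\theta^\sharp$ is the geodesic spray, $\pi_*S_u=u$, so $\theta(S_u)=g(u,u)=1$ on $SN$ (equivalently, $S$ is the Reeb field of the contact form $\theta$). Second --- and this is what the preceding sentence means by calling $\Lambda$ ``an invariant of the geodesic flow'' --- one has $S\lrcorner\alpha_i=0$ for every $i$, hence $S\lrcorner\Lambda=0$. I would justify the latter by recalling from \cite{Alb2012} that $\alpha_n$ is the (purely vertical) fibre volume form, annihilated by the horizontal field $S$, and that the mirror endomorphism $B$ kills the geodesic-flow direction on the unit bundle (there is no radial vertical direction there); consequently $S\lrcorner\bigl(\alpha_n\circ(B^{n-i}\wedge1^i)\bigr)=0$ as well.

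Granting these, the computation is short. From $S\lrcorner\Lambda=0$ and the magic formula,
\[
 \call_S\Lambda=S\lrcorner\dx\Lambda=S\lrcorner(\theta\wedge\Lambda_1)=\theta(S)\,\Lambda_1-\theta\wedge(S\lrcorner\Lambda_1)=\Lambda_1-\theta\wedge(S\lrcorner\Lambda_1).
\]
By the magic formula \eqref{derivadasalpha_iemCSecC}, applied term by term to $\Lambda=\sum_j b_j\alpha_j$, the form $\Lambda_1=\sum_j b_j\bigl((j+1)\alpha_{j+1}-c(n-j+1)\alpha_{j-1}\bigr)$ is again a constant-coefficient combination of the $\alpha_k$, so $S\lrcorner\Lambda_1=0$ and we conclude $\call_S\Lambda=\Lambda_1$.

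If instead one wants $\Lambda_1$ regarded only as an abstract solution of $\dx\Lambda=\theta\wedge\Lambda_1$ --- which determines it only up to $\theta\wedge\Omega^{n-1}_{SN}$ --- the same display shows $\call_S\Lambda$ is one admissible choice; moreover, since $\theta\wedge\theta=0$, replacing $\Lambda_1$ by $\Lambda_1+\theta\wedge\gamma$ does not change the right-hand side, so $\call_S\Lambda$ is the distinguished representative, namely the one characterised by $S\lrcorner(\,\cdot\,)=0$. (Indeed $S\lrcorner\,$ commutes with $\call_S$ because $[S,S]=0$ gives $\call_S(S\lrcorner\,\cdot\,)-S\lrcorner\call_S(\,\cdot\,)=0$, whence $S\lrcorner\call_S\Lambda=\call_S(S\lrcorner\Lambda)=0$.)

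The only genuine obstacle is the verification that $S\lrcorner\alpha_i=0$, i.e.\ that the $\alpha_i$ are horizontal along the Reeb direction; this rests on the explicit description of $B$ on the unit tangent sphere bundle from \cite{Alb2012}, and once it is in hand the proposition is a one-line application of Cartan's formula.
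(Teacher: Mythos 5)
Your proof is correct, and in fact the paper states this proposition without any proof, so your Cartan-formula argument supplies exactly the missing one-line verification: with $\theta(S)=1$ and $S\lrcorner\Lambda=0$ one gets $\call_S\Lambda=S\lrcorner\dx\Lambda=\Lambda_1-\theta\wedge(S\lrcorner\Lambda_1)$, and you are right both that the statement must be read for the distinguished representative $\Lambda_1$ with $S\lrcorner\Lambda_1=0$ (the one produced termwise by \eqref{derivadasalpha_iemCSecC}, since $\Lambda_1$ is only determined modulo $\theta\wedge\Omega^{n-1}$) and that $\call_S\Lambda$ is always such a representative. The only point to tighten is your justification of $S\lrcorner\alpha_i=0$: the mirror map does not kill the geodesic-flow direction --- by its definition $BS$ at $u\in SN$ is the vertical lift of $u$, the radial vertical direction normal to the fibre sphere --- but that contribution dies anyway because $\alpha_n$, being the fibre volume form (the contraction of the vertical volume with the radial direction), annihilates it; equivalently, in the adapted coframe of \cite{Alb2012} every $\alpha_i$ is built only from the horizontal coframe orthogonal to $\theta$ and the vertical coframe, hence contains no $\theta$ factor. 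With that precision in place, the proposition follows exactly as you say.
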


\begin{prop}  \label{Prop_dLambda_vanishingcases}
\,$\dx\Lambda=0$ if and only if
 \begin{itemize}
   \item in case $c=0$, we have $b_0=b_1=\ldots=b_{n-1}=0$
   \item in case $c\neq0$ and $n$ is odd, we have $b_0=b_1=\ldots=b_{n-1}=b_n=0$,
   \item in case $c\neq0$ and $n$ is even, we have $b_1=b_3=b_5=\ldots=b_{n-1}=0$ and
     \begin{equation} \label{fuel2}
  b_{2p}=\frac{1\cdot3\cdot5\cdots(2p-1)b_0}{c^p(n-1)(n-3)
   \cdots(n-2p+1)},\ \ \forall p=1,\ldots,\tfrac{n}{2}.
   \end{equation}
\end{itemize}
\end{prop}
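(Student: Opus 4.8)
The plan is to compute $\dx\Lambda$ explicitly from \eqref{derivadasalpha_iemCSecC} and extract the recursion on the coefficients $b_j$ by collecting, for each index $k$, the coefficient of $\theta\wedge\alpha_k$. Writing $\Lambda=\sum_{j=0}^n b_j\alpha_j$, linearity and the magic formula give
\begin{equation*}
 \dx\Lambda=\sum_{j=0}^n b_j\,\theta\wedge\bigl((j+1)\alpha_{j+1}-c(n-j+1)\alpha_{j-1}\bigr)
  =\theta\wedge\sum_{k=0}^{n}\bigl(k\,b_{k-1}-c(n-k)\,b_{k+1}\bigr)\alpha_k ,
\end{equation*}
using $\alpha_{-1}=\alpha_{n+1}=0$. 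Since the forms $\theta\wedge\alpha_0,\ldots,\theta\wedge\alpha_n$ are pointwise linearly independent on $SN$ (they are the nonzero components in the bigrading coming from $B$; this is implicit in the construction of the $\alpha_i$ in \cite{Alb2012}), $\dx\Lambda=0$ is equivalent to the system
\begin{equation*}
 k\,b_{k-1}=c(n-k)\,b_{k+1},\qquad k=0,1,\ldots,n .
\end{equation*}
I would then simply read off the three cases. The $k=0$ equation is vacuous ($0=0$); the $k=1$ equation gives $b_0=c(n-1)b_2$, and in general the equation indexed by $k$ links $b_{k-1}$ to $b_{k+1}$, so the system decouples into the even-index chain and the odd-index chain.

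For the odd chain, start at the top: the equation $k=n$ reads $n\,b_{n-1}=0$, so $b_{n-1}=0$; if $n$ is even this is the first odd index below $n$, and descending through $k=n-2,n-4,\dots$ forces $b_{n-3}=b_{n-5}=\cdots=b_1=0$, which is the stated vanishing of the odd coefficients. If instead $n$ is odd, the same top equation $k=n$ now kills the even-top coefficient... more precisely one must track which parity $n$ has; when $n$ is odd the chain through $k=n$ is the even chain terminating at $b_n$, giving $b_n=0$ and then $b_{n-2}=\cdots=b_1=0$ down through all indices of that parity, and the other chain (through $k=n-1$ even) gives $b_{n-1}=0$ and $b_{n-3}=\cdots=b_0=0$; hence all $b_j$ vanish. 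When $c=0$ the system is $k\,b_{k-1}=0$ for $k=1,\dots,n$, i.e. $b_0=b_1=\cdots=b_{n-1}=0$ with $b_n$ free, matching the first bullet. When $c\neq0$ and $n$ even, the even chain is not forced to vanish: from $k=2p$ we get $(2p-1)b_{2p-2}\cdot 1 = \ldots$; rather, using $k=2p-1$ odd? No — one checks that for the even indices the relevant equations are $k=1,3,\dots$: the equation $k=2p-1$ reads $(2p-1)b_{2p-2}=c(n-2p+1)b_{2p}$, i.e.
\begin{equation*}
 b_{2p}=\frac{(2p-1)\,b_{2p-2}}{c\,(n-2p+1)},\qquad p=1,\dots,\tfrac n2 ,
\end{equation*}
and iterating from $b_0$ yields exactly \eqref{fuel2}. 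The fact that the chain closes up consistently at the top (the equation $k=n+1$ would involve $b_n$ but is absent, and $k=n$ gives $n b_{n-1}=0$, harmlessly repeating the odd-chain vanishing) shows there is no further obstruction, so $b_0$ is a free parameter and the solution space is one-dimensional — recovering, up to scale, the invariant $\pr$.

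The one genuine point requiring care, and the place I expect to spend the most effort, is justifying the linear independence of $\theta\wedge\alpha_0,\ldots,\theta\wedge\alpha_n$ at each point of $SN$, so that the single vector equation $\dx\Lambda=0$ legitimately splits into the scalar system above; once that is in hand the rest is the elementary two-term recursion bookkeeping sketched here, together with a careful parity case-split on $n$ versus $c=0$ to separate the three bullets. I would also double-check the boundary indices $k=0$ and $k=n$ against the conventions $\alpha_{-1}=\alpha_{n+1}=0$, since an off-by-one there is the only way the stated cases could come out wrong.
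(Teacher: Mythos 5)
Your reduction of $\dx\Lambda=0$ to the two-term system $k\,b_{k-1}=c(n-k)\,b_{k+1}$, $k=0,\ldots,n$, is exactly the paper's route, and your handling of the cases $c=0$ and ($c\neq0$, $n$ even) is correct, including the iteration giving \eqref{fuel2}. But there is a genuine error at the boundary index you flagged yourself: the $k=0$ equation is \emph{not} vacuous. Its content is $0\cdot b_{-1}-c(n-0)b_1=0$, i.e. $c\,n\,b_1=0$, and this is precisely the equation the paper extracts first (``from the cases $j=0,\,n$ we obtain $cb_1=0$, $b_{n-1}=0$''). Discarding it breaks your argument in the case $c\neq0$, $n$ odd: there the equations with $k\equiv n \pmod 2$ (i.e.\ $k=n,n-2,\ldots$) kill only $b_{n-1},b_{n-3},\ldots,b_0$, while the chain containing $b_1,b_3,\ldots,b_n$ is governed by the equations with even $k$, each of which merely expresses $b_{k+1}$ in terms of $b_{k-1}$ (since $n-k\neq0$ for $k<n$). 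Without the $k=0$ equation forcing $b_1=0$, that chain admits a one-parameter family of nonzero solutions generated by $b_1$, so ``all $b_j$ vanish'' does not follow from what you wrote. Your parity bookkeeping in that case is also inverted: the equation $k=n$ yields $b_{n-1}=0$, not $b_n=0$, and $b_n$ (odd index when $n$ is odd) can only be reached by starting from $cb_1=0$ and ascending through $k=2,4,\ldots,n-1$.

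The fix is small and restores agreement with the paper: record the two boundary equations $cb_1=0$ ($k=0$) and $b_{n-1}=0$ ($k=n$), then run the ascending chain from $b_1$ and the descending chain from $b_{n-1}$; when $c\neq0$ and $n$ is odd these two chains exhaust all indices and everything vanishes, while for $n$ even the descending chain kills the odd coefficients and the ascending recursion on even indices gives \eqref{fuel2}. Your other concern, the pointwise linear independence of the $\alpha_0,\ldots,\alpha_n$ (equivalently of the $\theta\wedge\alpha_j$), is asserted and used in the same way in the paper, so that is not a divergence.
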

\begin{proof}
It is clear that
\begin{equation}
 \dx \Lambda=\theta\wedge\sum_{j=0}^n(jb_{j-1}-c(n-j)b_{j+1})\alpha_j .
\end{equation}
 The $\alpha_0,\ldots,\alpha_n$ are linearly independent, so from the cases 
 $j=0,\,n$ we obtain $cb_1=0,\,b_{n-1}=0$, respectively. We may immediately 
 assume $c\neq0$. Then $b_1=0$; and then $b_3=0$; and so on and so forth. 
 If also $n$ is odd, then it follows $b_n=0$. Moreover, in the descending 
 order, $b_{n-1}=0$ implies all the $b_{\mathrm{even}}=0$. Finally, if $n$ 
 is even, then the last remaining condition must hold: $b_{\mathrm{odd}}=0$ 
 and an induction on $jb_{j-1}-c(n-j)b_{j+1}=0$ yields the formula.
\end{proof}
In case $c=0$, we may have $b_n\neq0$ and thus find a variational trivial functional. $S_n$ is the so-called Gauss-Kronecker curvature.

Let $n=2l$ and let us take $b_0=c^l(2l-1)!!$. Then
\begin{equation}  \label{thedifferentialinvariant_withlambda}
 \hat{f_t}^*\Lambda =\sum_{i=0}^{n}b_iS_i\,\vol_M=\sum_{p=0}^{l}
 (2l-2p-1)!!(2p-1)!!c^{l-p}S_{2p}\,\vol_M
\end{equation}
yields the invariant associated to $\pr$ as described in Theorem \ref{TheoremInvariantbyDeformation}.

We remark P.~Gilkey~et~al.~have shown that there is only one natural differential invariant, up to a multiple, which is defined for all manifolds and metrics; namely, the invariant which yields Euler characteristic, cf. \cite{NavNav}. However, since we have a restricted class of submanifolds of a restricted class of manifolds, we could not immediately rule out the existence of some other differential invariants in this context and, in particular, one dependent on the metric.

\vspace{2mm}
\begin{center}
 \textbf{4. $\chi$ is the Euler characteristic}
\end{center}
\setcounter{section}{4}

To the best of our knowledge, the generalized Theorem of Gauss-Bonnet-Chern 
is first surveyed in its most comprehensive and insightful form in 
\cite{MilnorStasheff}, and such is the form we wish to follow here. The 
theorem establishes the identity $\chi(M)= \int_M\pfaff(\Omega/2\pi)$, where 
$\Omega=[\Omega_{ij}]_{i,j=1,\ldots,n}$ is the curvature 2-form matrix and 
$\pfaff(\Omega)$ is the Pfaffian of $\Omega$, a closed $n$-form and a 
non-trivial object in even dimensions in general.

On a constant sectional curvature ambient manifold $N(c)$, for a hypersurface $f:M\rr N(c)$ and a positively oriented orthonormal frame $e_1,\ldots,e_n$ of principal curvatures, we have by Gauss-Codazzi equations, cf.\cite{Alb2012}:
\begin{equation}
 R^M_{pkij}=(c+\lambda_i\lambda_j)(\delta_{pi}\delta_{kj}-
               \delta_{pj}\delta_{ki}) .
\end{equation}
Hence the matrix of 2-forms corresponds to $\Omega_{ij}=(c+\lambda_i\lambda_j)e^{ij}$. The Pfaffian $n$-form is the Pfaffian of the following skew-symmetric matrix
\begin{equation}\label{GaussCodazzimatrix}
 \left[\begin{array}{cccccc}
       0 & (c+\lambda_1\lambda_2)e^{12} & (c+\lambda_1\lambda_3)e^{13} &  & & (c+\lambda_1\lambda_n)e^{1n}  \\
     (c+\lambda_1\lambda_2)e^{21} & 0 & (c+\lambda_2\lambda_3)e^{23} & \cdots & & (c+\lambda_2\lambda_n)e^{2n}  \\         
     (c+\lambda_1\lambda_3)e^{31} &  (c+\lambda_2\lambda_3)e^{32} & 0 & & & (c+\lambda_3\lambda_n)e^{3n}  \\ 
     & \cdots & & \ddots & &  \\
     (c+\lambda_1\lambda_n)e^{n1} & (c+\lambda_2\lambda_n)e^{n2} &  &  & &0  \end{array}\right] .
\end{equation}
Let us denote such matrix by $\Omega(c;\lambda_1,\ldots,\lambda_n)$.

For $n=2$, clearly $\pfaff(\Omega(c;\lambda_1,\lambda_2))=(c+\lambda_1\lambda_2)e^{12}$, proving \eqref{CaractEuler}. As well as with $n=4$, where a well-known formula is applied
\begin{equation}
\begin{split}
 & \pfaff(\Omega(c;\lambda_1,\lambda_2,\lambda_3,\lambda_4)) =\Omega_{12}\wedge\Omega_{34}-\Omega_{13}\wedge\Omega_{24} 
 +\Omega_{23}\wedge\Omega_{14} \\
   & \qquad =\bigl(3c^2+c(\lambda_1\lambda_2+\lambda_3\lambda_4+\lambda_1\lambda_3+\lambda_2\lambda_4+\lambda_2\lambda_3+\lambda_1\lambda_4)
   +3\lambda_1\lambda_2\lambda_3\lambda_4\bigr)e^{1234} .
\end{split}
\end{equation}

Now, we recall a `Laplace rule' for the Pfaffian of an $n\times n$ skew-symmetric matrix $X$, cf. \cite[p.27]{DSerre}:
\begin{equation} \label{Pfaffianbyrecurrence}
 \pfaff(X)=\sum_{j=2}^n(-1)^jx_{1j}\pfaff(X^{1j})
\end{equation}
where $X^{ij}$ denotes the skew-symmetric matrix with the $i$th and $j$th rows and columns removed. Since $\Lambda^{\mathrm{even}}T^*M$ is commutative, the formula is valid for the curvature form, i.e. giving the preferred square-root of the determinant of $\Omega$.

For $n=2l$, it follows that $\pfaff(\Omega(c;0,\ldots,0))=(n-1)!!\,c^l\,\vol$. Therefore,
\begin{equation}
 \pfaff(\Omega(0;\lambda_1,\ldots,\lambda_n)) = (n-1)!!\,\lambda_1\cdots\lambda_n\,\vol .
\end{equation}
The next result finally establishes Allendoerfer-Weyl's formula 
\eqref{CaractEuler}.
\begin{prop}
 We have that $\pfaff(\Omega)=\pr\,\vol_M$, where $\pr$ is given in \eqref{thedifferentialinvariant}.
\end{prop}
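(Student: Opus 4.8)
The plan is to expand the Pfaffian of \eqref{GaussCodazzimatrix} directly over perfect matchings and then reorganise the resulting polynomial in $\lambda_1,\ldots,\lambda_n$ according to how many ``curvature'' factors $\lambda_a\lambda_b$ are kept versus how many factors $c$. Recall that for a skew-symmetric $2l\times 2l$ matrix $X=[x_{ab}]$ with entries in a commutative ring --- here $\Lambda^{\mathrm{even}}T^*_xM$, the same commutativity already invoked for \eqref{Pfaffianbyrecurrence} --- one has
\[
 \pfaff(X)=\sum_{\mathcal{P}}\varepsilon(\mathcal{P})\prod_{\{a,b\}\in\mathcal{P},\ a<b}x_{ab},
\]
the sum ranging over all perfect matchings $\mathcal{P}$ of $\{1,\ldots,2l\}$ and $\varepsilon(\mathcal{P})$ denoting the sign of the permutation $(a_1,b_1,a_2,b_2,\ldots)$ obtained by listing $\mathcal{P}$ (with $a_k<b_k$). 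This is the closed form underlying \eqref{Pfaffianbyrecurrence}; alternatively one could run the same computation inductively on $l$ through \eqref{Pfaffianbyrecurrence} itself, at the price of an auxiliary elementary-symmetric-function identity.

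First I would substitute $x_{ab}=(c+\lambda_a\lambda_b)e^{ab}$. Since the $e^{ab}$ are $2$-forms they commute, so $\prod_{\{a,b\}\in\mathcal{P},\,a<b}e^{ab}=e^{a_1}\wedge e^{b_1}\wedge\cdots=\varepsilon(\mathcal{P})\,e^{1\cdots n}=\varepsilon(\mathcal{P})\,\vol_M$, the frame being positively oriented. Hence the two signs square away, every matching contributes $\varepsilon(\mathcal{P})^2\prod_{\{a,b\}\in\mathcal{P}}(c+\lambda_a\lambda_b)\,\vol_M$, and
\[
 \pfaff(\Omega)=\Bigl(\sum_{\mathcal{P}}\ \prod_{\{a,b\}\in\mathcal{P}}(c+\lambda_a\lambda_b)\Bigr)\vol_M .
\]

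Next I would expand $\prod_{\{a,b\}\in\mathcal{P}}(c+\lambda_a\lambda_b)=\sum_{\mathcal{T}\subseteq\mathcal{P}}c^{\,l-|\mathcal{T}|}\prod_{\{a,b\}\in\mathcal{T}}\lambda_a\lambda_b$, interchange the order of summation, and group the pairs $(\mathcal{P},\mathcal{T})$ by $p=|\mathcal{T}|$. A partial matching $\mathcal{T}$ with $p$ pairs covers a set $U$ of $2p$ indices and extends to a perfect matching of $\{1,\ldots,2l\}$ in exactly $(2l-2p-1)!!$ ways; moreover, for a fixed $U$ the monomial $\prod_{\{a,b\}\in\mathcal{T}}\lambda_a\lambda_b$ equals $\prod_{a\in U}\lambda_a$ regardless of the pairing, and there are $(2p-1)!!$ pairings of $U$. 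Summing $\prod_{a\in U}\lambda_a$ over all $2p$-subsets $U$ produces $S_{2p}$, so the contribution at level $p$ is $(2l-2p-1)!!\,(2p-1)!!\,c^{\,l-p}S_{2p}$ and
\[
 \sum_{\mathcal{P}}\ \prod_{\{a,b\}\in\mathcal{P}}(c+\lambda_a\lambda_b)=\sum_{p=0}^{l}(2l-2p-1)!!\,(2p-1)!!\,c^{\,l-p}S_{2p}=\pr,
\]
the extreme terms $p=0,l$ using the convention $(-1)!!=1$. This is exactly \eqref{thedifferentialinvariant}, so the proposition follows; the cases $n=2,4$ and the scalar case $\lambda_j\equiv0$ computed above are instances.

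The only genuinely delicate points are bookkeeping: checking that the sign $\varepsilon(\mathcal{P})$ appearing in the Pfaffian formula is precisely the one relating $\prod e^{ab}$ to $\vol_M$, so that the squares cancel; and, in the last step, that $(2l-2p-1)!!$ counts the extensions of a partial matching while the product of the $\lambda$'s over a matching of a fixed index set is pairing-independent --- this is what collapses the double sum into the clean factor $(2p-1)!!\,S_{2p}$.
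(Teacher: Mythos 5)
Your argument is correct, but it follows a genuinely different route from the paper. The paper proves the identity by induction on $l$: it applies the Laplace rule \eqref{Pfaffianbyrecurrence} to expand along the first row, feeds in the inductive expression of $\pr(\lambda_2,\ldots,\widehat{\lambda_j},\ldots,\lambda_{2l})$, uses the double-factorial recursion $A_{l,p}=(2p-1)A_{l-1,p-1}=(2l-2p-1)A_{l-1,p}$, and reduces everything to the elementary-symmetric identity ($\star$), which is then checked by two separate counting arguments. You instead invoke the closed perfect-matching expansion of the Pfaffian, verify that the sign $\varepsilon(\mathcal{P})$ in that formula is exactly the sign relating $e^{a_1b_1}\wedge\cdots\wedge e^{a_lb_l}$ to $\vol_M$ (so the signs square away), and then perform the whole count in one stroke: $(2l-2p-1)!!$ extensions of a partial matching times $(2p-1)!!$ pairings of a fixed $2p$-set, the latter collapsing onto $S_{2p}$ because the monomial is pairing-independent. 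This makes the double factorials in \eqref{thedifferentialinvariant} appear for a transparent combinatorial reason rather than through an inductive recursion, and it absorbs the content of ($\star$) into the exchange of summation; the price is that you rely on the closed matching formula rather than only on the Laplace rule quoted in the paper --- though, as you note, that formula satisfies \eqref{Pfaffianbyrecurrence} and agrees with it for $n=2$, so the normalization is the same, and the commutativity of $\Lambda^{\mathrm{even}}T^*M$ justifies using it for the matrix of $2$-forms exactly as in the paper. The paper's induction keeps the argument tied to the cited recurrence and produces ($\star$) as a statement of independent interest; your version is shorter, self-contained at the level of the coefficient identity, and explains structurally why $\pfaff(\Omega)$ is the generating sum $\sum_{\mathcal{P}}\prod(c+\lambda_a\lambda_b)\,\vol_M$.
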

\begin{proof}
 Let $A_{l,p}=(2l-2p-1)!!\,(2p-1)!!$. Then we have $A_{l,0}=A_{l,l}=(2l-1)!!$ and clearly $A_{l,p}=(2p-1)A_{l-1,p-1}=(2l-2p-1)A_{l-1,p}$. Now we shall prove the result by induction. Since it is true for $n=2$, we next assume $l>1$ and apply \eqref{Pfaffianbyrecurrence}. With a formal quotient by $\vol$,
 \begin{equation*}
 \begin{aligned}
 \lefteqn{ \frac{1}{\vol}\pfaff(\Omega) =\frac{1}{\vol}\sum_{j=2}^{2l}(-1)^j(c+\lambda_1\lambda_j)e^{1j}\wedge\pfaff(\Omega^{1j}) }\\
  &= \sum_{j=2}^{2l}(c+\lambda_1\lambda_j)\pr(\lambda_2,\ldots,\lambda_{j-1},\lambda_{j+1},\ldots,\lambda_{2l}) \\
  & = \sum_{j=2}^{2l}\sum_{p=0}^{l-1}(c+\lambda_1\lambda_j)c^{l-1-p}A_{l-1,p}\,S_{2p}(\lambda_2,\ldots,\lambda_{j-1},\lambda_{j+1},\ldots,\lambda_{2l})   \\
  & = \sum_{j=2}^{2l}\sum_{p=0}^{l-1}c^{l-p}A_{l-1,p}\,S_{2p}(\lambda_2,\ldots,\widehat{\lambda_{j}},\ldots,\lambda_{2l}) + \sum_{j=2}^{2l}\sum_{q=1}^{l}c^{l-q}A_{l-1,q-1}\,\lambda_1\lambda_jS_{2q-2}(\lambda_2,\ldots,\widehat{\lambda_{j}},\ldots,\lambda_{2l}) \\
  & = (2l-1)!!(c^l+\lambda_1\cdots\lambda_{2l}) + \sum_{q=1}^{l-1}c^{l-q} \sum_{j=2}^{2l}\biggl(A_{l-1,q}\,S_{2q}(\lambda_2,\ldots,\widehat{\lambda_{j}},\ldots,\lambda_{2l}) \biggr. \\
  &\hspace{65mm} \biggl. +A_{l-1,q-1}\lambda_1\lambda_jS_{2q-2}(\lambda_2,\ldots,\widehat{\lambda_{j}},\ldots,\lambda_{2l})\biggr)\\
  & = A_{l,0}c^l+A_{l,l}S_{2l} + \sum_{q=1}^{l-1}c^{l-q}A_{l,q}
  \sum_{j=2}^{2l}\biggl(\frac{S_{2q}(\lambda_2,\ldots,\widehat{\lambda_{j}},\ldots,\lambda_{2l})}{2l-2q-1}+\frac{\lambda_1\lambda_jS_{2q-2}(\lambda_2,\ldots,\widehat{\lambda_{j}},\ldots,\lambda_{2l})}{2q-1} \biggr).
 \end{aligned}
 \end{equation*}
The result follows once we prove, for all $1\leq q\leq l-1$,
\begin{equation*}
 \sum_{j=2}^{2l}\biggl(\frac{S_{2q}(\lambda_2,\ldots,\widehat{\lambda_{j}},\ldots,\lambda_{2l})}{2l-2q-1}+\frac{\lambda_1\lambda_jS_{2q-2}(\lambda_2,\ldots,\widehat{\lambda_{j}},\ldots,\lambda_{2l})}{2q-1} \biggr)
 =S_{2q}(\lambda_1,\ldots,\lambda_{2l}) .\qquad\quad(\star)
 \end{equation*}
Firstly, arguing by symmetry and with each monomial, we find
\begin{equation*}
\begin{aligned}
 & S_{2q}(\lambda_3,\lambda_4,\ldots,\lambda_{2l})+S_{2q}(\lambda_2,\lambda_4,\ldots,\lambda_{2l})+\cdots+S_{2q}(\lambda_2,\lambda_3,\ldots,\lambda_{2l-2},\lambda_{2l})+S_{2q}(\lambda_2,\lambda_3,\ldots,\lambda_{2l-1}) \\  &\qquad=\ (2l-2q-1) S_{2q}(\lambda_2,\lambda_3,\lambda_4,\ldots,\lambda_{2l-1},\lambda_{2l}).
\end{aligned}
\end{equation*}
Secondly, another careful inspection gives
\begin{equation*}
\begin{aligned}
 & \lambda_2S_{2q-2}(\lambda_3,\lambda_4,\ldots,\lambda_{2l})+\lambda_3S_{2q-2}(\lambda_2,\lambda_4,\ldots,\lambda_{2l})+\cdots+\lambda_{2l}S_{2q-2}(\lambda_2,\lambda_3,\ldots,\lambda_{2l-1}) \\  &\qquad=\ (2q-1) S_{2q-1}(\lambda_2,\lambda_3,\lambda_4,\ldots,\lambda_{2l-1},\lambda_{2l}).
\end{aligned}
\end{equation*}
Hence the left hand side of ($\star$) becomes
\begin{equation*}
 =\: S_{2q}(\lambda_2,\ldots,\lambda_{2l})+\lambda_1S_{2q-1}(\lambda_2,\ldots,\lambda_{2l}) \:=\: S_{2q}(\lambda_1,\ldots,\lambda_{2l}) 
\end{equation*}
as we wished.
\end{proof}

Let us see a simple formula. For two principal curvatures $\lambda,\mu$ of multiplicities respectively $1,n-1$ we have
\begin{equation}   \label{specialcurvature}
\begin{split}
 \pfaff(\Omega(c;\lambda,\mu,\ldots,\mu)) &=  (c+\lambda\mu)\sum_{j=2}^{2l}(-1)^je^{1j}\wedge\pfaff(\Omega(c+\mu^2;0,\ldots,0)) \\  &= (2l-1)!!
 (c+\lambda\mu)(c+\mu^2)^{l-1}\,\vol  .
 \end{split}
\end{equation}
This can be checked either through \eqref{GaussCodazzimatrix} or \eqref{thedifferentialinvariant}.

Example: We consider the canonical immersion of hypersurfaces $S^{p,q}_r:=S^p_r\times S^q_{\sqrt{1-r^2}}\hookrightarrow S^{n+1}_1$ for any integers $0\leq p,q \leq n=2l=p+q$ and $r\in]0,1[$. We shall need the formula for the volume $\omega_m$ of an $m$-sphere, $m\in\N$, of radius 1, which is $\omega_m:=2\pi^{(m+1)/2}/\Gamma((m+1)/2)$, where $\Gamma$ is the Gamma function.

It is well known that $\chi(S^{p,q}_r)=0$, for $p,q$ odd; it is equal to $2$, for $p=0$ or $q=0$; equal to $4$, for $p,q\neq0$ even. Consulting \cite[Example 2]{AliasBrasilPerdomo}, cf. \cite{Alb2018}, we find the principal curvatures:
\begin{equation}\label{principalcurvaturesCliffordSpheres}
 \lambda_1=\cdots=\lambda_p=-\frac{\sqrt{1-r^2}}{r},\quad
 \lambda_{p+1}=\cdots=\lambda_n=\frac{r}{\sqrt{1-r^2}}.
\end{equation}
The two values, say $\lambda,\mu$, respectively, satisfy $1+\lambda\mu=0$. By \eqref{specialcurvature} it follows that ${\chi}(S^{n-1,1}_r)=0$ as expected. The following results are much easier to find directly from the Pfaffian of \eqref{GaussCodazzimatrix}. In fact, since $c+\lambda\mu=0$, the multiplicative nature of $\chi$ on $S^{p,q}_r$ is immediately seen through the matrix computation. Yet the following is still interesting to see using $\pr\,\vol$, that is, by \eqref{specialcurvature}:
\begin{equation}\label{characteristicEPspheres}
 \chi(S^{n}_r)= \frac{1}{2^l\pi^l}\int\pfaff(\Omega)= \frac{(2l-1)!!}{2^l\pi^l}\int_{S^{n}_r}\bigl(1+\frac{1-r^2}{r^2}\bigr)^l\vol_r = \frac{(2l-1)!!}{2^l\pi^l}\vol(S^n_1)=2. 
\end{equation}
In the same way, one finds
\begin{equation}
\chi(S^{2,2}))=4=(\chi(S^2))^2=\chi(S^{2,4})).
\end{equation}

By \eqref{characteristicEPspheres} there remains no doubt that dividing the invariant $\int_M\pr\,\vol$ by $(2\pi)^l$ yields always an integer in the equivalence class of the $2l$-sphere.

\vspace{2mm}
\begin{center}
 \textbf{5. Chern's proof of Gauss-Bonnet and a theorem of H.~Hopf}
\end{center}
\setcounter{section}{5}

Let us try to follow S.S.~Chern's proof in \cite{Chern} of the celebrated result bearing his name, in the case of $N=N(c)$, of course of even dimension. Thus now $n$ is odd.

A striking passage recurs to Hopf's index Theorem and therefore to the choice of a unit vector field $X$ with isolated zeros. Indeed, the unit sphere bundle plays a fundamental role in \cite{Chern}. Since $\pi$ is a Riemannian submersion, the integral of any form $\tilde{\Omega}$ over $N$ is the same as the integral of the pullback form over the $(n+1)$-submanifold or cycle $V=X(N\backslash\{\mbox{singular points}\})$ of $SN$. Moreover, $-\partial V$  equals the $n$-cycle $S^n_1\chi(N)$, where $S^n_1$ represents the standard fiber or fundamental class, cf. \cite{Chern,MilnorStasheff}. In our understanding, it is only what later became known as Thom duality, brought in with Stokes identity, which sustains Chern's swift and ingenious breakthrough.

The proof requires an \textit{intrinsic} or metric-invariant global $n$-form $\Pi$, which we search here for within the exterior differential system of the $\{\alpha_i\}$ and which satisfies both $\dx\Pi$ equal to $\pi^*\tilde{\Omega}$ when restricted to $V$, for some $\tilde{\Omega}$, and that the restriction of $\Pi$ on the fibers coincides with that of $b_n\alpha_n$, with $b_n=-1/\omega_n$ and $\omega_n=\vol(S^n_1)$. We recall $\alpha_n$ is the volume form on the fibers. Hence the desired identities
\begin{equation}
 \int_{N}\tilde{\Omega}=\int_{\partial V}\Pi=\chi(N) .
\end{equation}
The only invariant $n+1$-form we have as a pullback is $\theta\wedge\alpha_0=\pi^*\vol_N$, so we are bound to find $c_0$ from $\dx\Pi=\theta\wedge\sum c_j\alpha_j$ where $\Pi=\sum b_j\alpha_j$. Proceeding as in Proposition \ref{Prop_dLambda_vanishingcases}, we clearly have the equations $jb_{j-1}-c(n-j)b_{j+1}=c_j$, of which there are plenty of solutions in the setting and in case $c\neq0$. A simple one is $c_j=0$, $j>0$. Thus
\begin{equation}
 b_{2j}=0,\qquad\quad b_{2j+1}=-\frac{(n-2j-2)!!(2j)!!c^{\frac{n-2j-1}{2}}}{(n-1)!!\omega_n}.
\end{equation}
Hence
\begin{equation} \label{bum_czero}
 b_1=-\frac{(n-2)!!c^{\frac{n-1}{2}}}{(n-1)!!\omega_n},\qquad \quad
 c_0=\frac{n!!c^{\frac{n+1}{2}}}{(n-1)!!\omega_n}.
\end{equation}

With $c>0$ we find, for any naturals $k$, $n=2k+1$, and for the sphere $N=S^{2k+2}_{r}$, $r=\frac{1}{\sqrt{c}}$, that $2=\chi(N)=\int_N c_0\theta\wedge\alpha_0=c_0r^{2k+2}\omega_{2k+2}$. Hence the trivial identity $\frac{\omega_{2k+2}}{\omega_{2k+1}}=2\frac{(2k)!!}{(2k+1)!!}$. 

Returning to \eqref{bum_czero}, we may write $c_0\omega_{2k+2}=2c^{k+1}$.
In other words, Gauss-Bonnet-Chern's formula reads $2c^{\frac{n+1}{2}}\vol(N)=\chi(N)\omega_{n+1}$ for $c\neq0$. Of course $N$ must be of finite volume.

In particular we recover a classical theorem of H.~Hopf, as it is well known, with important implications to hyperbolic geometry. Let $\cal N$ be a hyperbolic space of even dimension $m$, constant sectional curvature $-1$ and  finite volume (e.g. the quotient of hyperbolic space form by a torsion-free discrete group of finite order). Then its volume is the topological invariant $\vol({\cal N})=(-1)^{m/2}\frac{\omega_{m}}{2}\chi({\cal N})$, a cornerstone in the arithmetic theory of hyperbolic geometry, cf. \cite{Belo,KellerhalsZehrt}.

\vspace{2mm}
\begin{center}
 \textbf{6. Other formulas from Allendoerfer's article}
\end{center}
\setcounter{section}{6}

We continue with the previous setting. Let $n$ be odd and $(N,g)$ denote an oriented $C^2$ Riemannian $(n+1)$-dimensional manifold of constant sectional curvature $c$.

Let $Q$ be a compact $(n+1)$-submanifold with $C^2$ boundary $\partial Q$. We assume the existence of a $C^1$ vector field $X$ on $Q$ such that:
\begin{itemize}
 \item $X$ has only isolated singularities and these lie in the interior of $Q$;
 \item $X$ coincides with the outward unit-normal $\vec{n}$ over $\partial 
Q$.
\end{itemize}

We may conjecture this is true for all $Q$. Certainly, an open, relatively compact and one-point contractible subset $Q\subset N$ with $C^2$ boundary admits such a vector field.

The questions raised in \cite{Allendoerfer} regarding a new formula of 
Steiner type, i.e. one which looks for the enclosed volume and 
area of hypersurfaces enlarged in geodesic outward directions, also 
refer to the case when $n$ is odd and the submanifold $Q$ is closed and 
bounding, of class $C^3$. Moreover, a complete metric on $N$ is assumed.

In particular for $\chi(Q)$, \cite[formula 20]{Allendoerfer} \textit{must 
be} the same as the one in the next theorem, for which we find a 
quick proof using the already seen argument from \cite{Chern}. Again the 
result may also be seen in \cite{Solanes}, with a complete proof.
\begin{teo} \label{Allendoerfer_odd}
Let $n=2k+1$ and $c\neq0$. In the above conditions, we have:
\begin{equation}  \label{Allendoerfer_odd_formula}
 c^{k+1}\vol(Q)+\frac{1}{n!!}\sum_{j=0}^k(2j)!!(2k-2j-1)!!\,c^{k-j} 
 \int_{\partial Q} S_{2j+1}\,\vol_{\partial Q}= 
 \frac{\chi(Q)}{2}\omega_{2k+2}.
\end{equation}
\end{teo}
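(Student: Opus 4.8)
The plan is to re-run, over the codimension-zero domain $Q\subset N(c)$, the transgression argument of S.~S.~Chern recalled in Section~5, the only new feature being a genuine boundary contribution from $\partial Q$. First I would pass to the unit field $\hat X=X/|X|$, which is of class $C^1$ on $Q$ minus its finite singular set and equals $\vec n$ along $\partial Q$; thus the graph embedding $\hat X\colon Q\setminus\{\mathrm{sing}\}\to SN$ restricts on the boundary to the canonical lift $\hat f$ of the hypersurface $\partial Q$ computed with the \emph{outward} unit normal. Set $V=\hat X(Q\setminus\{\mathrm{sing}\})$, an oriented $(n+1)$-chain in $SN$ with $\pi\circ\hat X=\mathrm{id}_Q$.

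The second step is to identify $\partial V$. Excising small geodesic balls about the singular points and letting their radii tend to $0$, the standard linking/index computation shows that each vanishing sphere contributes $-\mathrm{index}_p(X)\,[S^n_1]$, where $[S^n_1]$ is the oriented fibre class; since $X$ points outward along $\partial Q$, Poincar\'e--Hopf for manifolds with boundary gives $\sum_p\mathrm{index}_p(X)=\chi(Q)$. With the orientation conventions of Section~5 --- for which the closed case $Q=N$ reads $\partial V=-\chi(N)[S^n_1]$ --- this yields $\partial V=\hat f(\partial Q)-\chi(Q)\,[S^n_1]$.

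The third step is Stokes applied to the intrinsic form $\Pi=\sum_j b_j\alpha_j$ of Section~5, which satisfies $\dx\Pi=c_0\,\theta\wedge\alpha_0=c_0\,\pi^*\vol_N$ with $c_0$ and the $b_j$ as in \eqref{bum_czero}, $b_{2j}=0$, $b_n=-1/\omega_n$, and $\Pi$ restricting on a fibre to $b_n\alpha_n$ (the lower $\alpha_i$ restrict to $0$ there). Since $\pi\circ\hat X=\mathrm{id}$, the left side of $\int_V\dx\Pi=\int_{\partial V}\Pi$ equals $c_0\vol(Q)$; on the right, $\int_{[S^n_1]}\Pi=b_n\omega_n=-1$, and by \eqref{fundamentalformulapullback} ($\hat f^*\theta=0$, $\hat f^*\alpha_i=S_i\vol$) one has $\int_{\hat f(\partial Q)}\Pi=\sum_{j=0}^{k}b_{2j+1}\int_{\partial Q}S_{2j+1}\,\vol_{\partial Q}$. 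Hence $\chi(Q)=c_0\vol(Q)-\sum_{j=0}^{k}b_{2j+1}\int_{\partial Q}S_{2j+1}\,\vol_{\partial Q}$.

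Finally I would substitute the explicit data from Section~5: $c_0\omega_{2k+2}=2c^{k+1}$, the identity $\omega_{2k+2}/\omega_{2k+1}=2(2k)!!/(2k+1)!!$, and $b_{2j+1}=-(2k-2j-1)!!\,(2j)!!\,c^{k-j}/((2k)!!\,\omega_{2k+1})$. Multiplying the identity for $\chi(Q)$ by $\tfrac12\omega_{2k+2}$ turns its first term into $c^{k+1}\vol(Q)$ and, using $\omega_{2k+2}/(2(2k)!!\,\omega_{2k+1})=1/n!!$, turns each boundary term into $\frac{1}{n!!}(2j)!!(2k-2j-1)!!\,c^{k-j}\int_{\partial Q}S_{2j+1}\,\vol_{\partial Q}$, which is exactly \eqref{Allendoerfer_odd_formula}. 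The delicate point is the second step: fixing orientations so that the boundary of the excised graph is precisely $\hat f(\partial Q)-\chi(Q)[S^n_1]$ with no stray sign, and matching the orientation of $\partial Q$ used in the pullback formula with the outward-normal convention. This, however, is the very Thom-duality-with-Stokes mechanism already used in Section~5, now carried out on a manifold with boundary, so it suffices to adjoin the $\hat f(\partial Q)$ term to Chern's identity.
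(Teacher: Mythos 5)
Your proposal is correct and follows essentially the same route as the paper's own proof: the graph $V=X(Q\setminus\{\mbox{singular points}\})$ in $SN$ with $\partial V=-\chi(Q)S^n_1\cup \hat f(\partial Q)$, the intrinsic form $\Pi$ from Section~5 with $\dx\Pi$ proportional to $\theta\wedge\alpha_0=\pi^*\vol_N$, Stokes, and the pullback identities \eqref{fundamentalformulapullback}. The only differences are cosmetic (you keep the normalization $c_0$ while the paper rescales $\Pi$ so that $\dx\Pi=\theta\wedge\alpha_0$) and that you spell out the Poincar\'e--Hopf/orientation bookkeeping which the paper merely asserts.
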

\begin{proof}
 First we notice that, as a cycle in $SN$, the subspace $V=X(Q\backslash\{\mbox{singular points}\})$ has as boundary the $n$-cycle
 \[ \partial V=-\chi(Q)S^n_1\cup X(\partial Q) . \]
 Again we may find over the tangent sphere bundle an invariant $n$-form $\Pi$ such that $\dx\Pi=\theta\wedge\alpha_0$. We have seen the solution in the previous Section. Since $\theta\wedge\alpha_0=\pi^*\vol_N$, then $X^*\dx\Pi=\vol_Q$. Applying Stokes identity and \eqref{fundamentalformulapullback} the result follows.
 \end{proof}
Clearly, the result is coherent with the formulas from the last Section 
where $Q=N$. Here we are simply not chiefly focused on $\chi$. We remark 
case $n=1$ is due to W.~Blaschke.

Example 1: It is interesting to check the case $n=1$ with a sphere $N=S^2_r$ where $r^{-1}=\sqrt{c}$. Let us take the usual spherical coordinates $(x,y,z)=(r\sin\phi\,\cos\theta,r\sin\phi\,\sin\theta,r\cos\phi)$ with $0\leq\theta\leq2\pi$ and $0\leq\phi\leq\pi$ the angle with the positive vertical axis. Let $Q$ be the spherical cylinder
\begin{equation}
-r<r\cos\phi_1\leq z\leq r\cos\phi_2<r .
\end{equation}
$\chi(Q)=0$, as it is easy to see. One computes, $\mathrm{area}(Q)=\int_{\phi_2}^{\phi_1}\int_0^{2\pi}r^2\sin\phi\,\dx\theta\,\dx\phi=2\pi r^2(\cos\phi_2-\cos\phi_1)$. One may recall the \textit{horizontal} projection from $S^2_r$ to the cylinder $x^2+y^2=r^2$ is equiareal. Also the two rays $r_1,r_2$ on the boundary components are given by $r_i=r\sin\phi_i$. Recalling \eqref{principalcurvaturesCliffordSpheres} and noticing the outward normal to $Q$ over the boundary, we find $\int_{\partial Q}S_1\,\vol_{\partial Q}=-\frac{\sqrt{r^2-r_2^2}}{rr_2}2\pi  r_2+\frac{\sqrt{r^2-r_1^2}}{rr_1}2\pi r_1=2\pi(\cos\phi_1-\cos\phi_2)$. Hence, as expected, 
\begin{equation} \label{modelexample}
 c\,\mathrm{area}(Q)+\int_{\partial Q}S_1\,\vol_{\partial Q}=0 . 
\end{equation}

Example 2: Let us \textit{otherwise} take a disc cap $Q\subset S^2_s$. Say we 
let $\phi_2=0$ in the same setting above. Then $\chi(Q)=1$ and, since the 
boundary has just one connected component, the identity of Theorem 
\ref{Allendoerfer_odd} is verified, like \eqref{modelexample} but with 
$\frac{\chi(Q)}{2}\omega_{2}=2\pi$ on the right hand side.

Taking the limit in $c\rr0$ in Theorem \ref{Allendoerfer_odd} we 
find the following result (i) again referred by Allendoerfer.
\begin{teo}
 (i) Let $n=2k+1$ be odd and let $Q$ be any relatively-compact domain with 
$C^2$ boundary in Euclidean space of dimension $n+1=2k+2$. Then
 \begin{equation}
  \chi(Q)=\frac{k!}{2\pi^{k+1}}\int_{\partial Q}S_{2k+1}\,\vol_{\partial Q}.
 \end{equation}
(ii) Let $n=2l$ be even and let $M$ be any compact hypersurface of class $C^2$ in Euclidean space of dimension $n+1$. Then
 \begin{equation}
  \chi(M)=\frac{(2l-1)!!}{2^l\pi^{l}}\int_{M}S_{2l}\,\vol_M .
 \end{equation}
\end{teo}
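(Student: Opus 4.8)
The plan is to read off both identities from results already established for $N(c)$, by letting $c\to0$. The only genuinely delicate point is that Theorem~\ref{Allendoerfer_odd} was proved only for $c\neq0$, so part~(i) must be obtained as an honest limit, whereas part~(ii) is a plain specialisation of \eqref{CaractEuler}.

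For part~(i), fix a relatively compact domain $Q\subset\R^{n+1}$, $n+1=2k+2$, with $C^2$ boundary, and pick $R$ with $\bar Q\subset\{|x|<R\}$. For $c$ of small absolute value equip a neighbourhood of $\bar Q$ with the metric $g_c=\bigl(1+\tfrac{c}{4}|x|^2\bigr)^{-2}\delta$ (the standard constant-curvature-$c$ conformal model of $\R^{n+1}$): it has constant sectional curvature $c$, realises an open piece of $N(c)$ containing $Q$, and converges to $\delta$ in $C^\infty$ on the compact set $\bar Q$ as $c\to0$. Since any compact manifold with boundary carries a $C^1$ vector field that is transverse-outward along its boundary with only isolated interior zeros (Poincar\'e--Hopf for manifolds with boundary), which we may rescale near $\partial Q$ to be the $g_c$-unit outward normal, the hypotheses of Theorem~\ref{Allendoerfer_odd} hold for $Q$ in $(\R^{n+1},g_c)$ for every such $c\neq0$. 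Applying that theorem and letting $c\to0$: the term $c^{k+1}\vol_{g_c}(Q)$ tends to $0$, and in the sum only the $j=k$ summand survives because its power of $c$ is $c^{0}$ while $\vol_{g_c}(Q)$, the principal curvatures of $\partial Q$, and $\vol_{\partial Q}$ all converge to their Euclidean values uniformly on $\bar Q$. Using $(-1)!!=1$ this leaves
\[
 \frac{(2k)!!}{(2k+1)!!}\int_{\partial Q}S_{2k+1}\,\vol_{\partial Q}=\frac{\chi(Q)}{2}\,\omega_{2k+2}.
\]
It remains to simplify the constant: from $\Gamma(k+\tfrac{3}{2})=\tfrac{(2k+1)!!}{2^{k+1}}\sqrt{\pi}$ one gets $\omega_{2k+2}=2^{k+2}\pi^{k+1}/(2k+1)!!$, and together with $(2k)!!=2^{k}k!$ the displayed equality becomes exactly $\chi(Q)=\tfrac{k!}{2\pi^{k+1}}\int_{\partial Q}S_{2k+1}\,\vol_{\partial Q}$.

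For part~(ii) no limit is needed: formula \eqref{CaractEuler}, together with its identification with $\int_M\pfaff(\Omega/2\pi)$ carried out in Section~4, is already valid at $c=0$ (Section~4 records $\pfaff(\Omega(0;\lambda_1,\ldots,\lambda_n))=(n-1)!!\,\lambda_1\cdots\lambda_n\,\vol$). Setting $c=0$ in \eqref{CaractEuler} annihilates every summand except $p=l$, whose coefficient is $(2l-2l-1)!!\,(2l-1)!!=(2l-1)!!$, giving at once $\chi(M)=\tfrac{(2l-1)!!}{2^{l}\pi^{l}}\int_{M}S_{2l}\,\vol_{M}$.

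The one real obstacle is making the $c\to0$ passage in~(i) rigorous: one must verify that the conformal model legitimately embeds $Q$ in $N(c)$ (it does, $\bar Q$ being compact and $g_c\to\delta$ in $C^\infty_{\mathrm{loc}}$), that the auxiliary vector field required by Theorem~\ref{Allendoerfer_odd} persists for each $c$, and that the shape operator of $\partial Q$ --- hence the integrals $\int_{\partial Q}S_{2j+1}\,\vol_{\partial Q}$ --- varies continuously with $c$; granting this, what is left is double-factorial and Gamma-function bookkeeping, and one may cross-check both parts against the classical readings $\chi(Q)=\deg(\text{Gauss map of }\partial Q)$ for $n$ odd and $\chi(M)=2\deg(\text{Gauss map of }M)$ for $n$ even.
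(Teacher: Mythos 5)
Your proposal is correct and follows essentially the same route as the paper: part (i) is obtained by letting $c\to0$ in Theorem \ref{Allendoerfer_odd} and part (ii) by setting $c=0$ in \eqref{CaractEuler} (using the Pfaffian identification of Section 4). Your additional care — the conformal constant-curvature models on a neighbourhood of $\bar Q$, the existence of the required boundary-normal vector field, and the continuity of the curvature integrals in $c$ — merely makes rigorous the limit the paper takes for granted, and your constant bookkeeping checks out.
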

 Case (ii) is stated here for completion; of course, arising from 
\eqref{CaractEuler}. We recall that if $M=\partial Q$ is a boundary, then 
$\chi(\partial Q)=2\chi(Q)$.

\vspace{2mm}
\begin{center}
 \textbf{7. On isoparametric hypersurfaces}
\end{center}
\setcounter{section}{7}

A smooth hypersurface $M$ isometrically immersed in an $(n+1)$-dimensional 
manifold of constant sectional curvature $c$ is isoparametric if and only if 
it has constant principal curvatures. Regarding Euler-Poincar\'e 
characteristic, the formula of Allendoerfer yields some results which might 
add to their further knowledge. We refer to 
\cite{Abresch,Chi,ASiffert,TangXieYan,Thor} for the important sources of 
historical and up to date information on isoparametric hypersurfaces. We 
shall take a short detour by that theory, in so as much as to explain the 
leading ideas. No source we have found has been occupied with $\chi$.

The celebrated work of \'E.~Cartan on isoparametric hypersurfaces starts with the fundamental result that
\begin{equation} \label{Cartanfundamentalformula}
\sum_{j\neq i} m_j\frac{c+\lambda_j\lambda_i}{\lambda_i-\lambda_j}=0 ,
\end{equation}
valid for each distinct principal curvature $\lambda_i$, with multiplicity $m_i$ and $1\leq i\leq g$, where $g$ is the standard for the number of such $\lambda_i$.

If $c\leq0$, then Cartan finds there are at most two such curvatures. If $g=1$, then by \eqref{specialcurvature} and \eqref{characteristicEPspheres} we find ($n=2l$)
\begin{equation}
 \chi(M)=\frac{2(c+\lambda^2)^l}{\omega_n}\vol(M) . 
\end{equation}

If $g=2$, then the fundamental formula yields $c+\lambda_1\lambda_2=0$ and so follows easily from \eqref{GaussCodazzimatrix} that the Euler characteristic
\begin{equation}
 \chi(M)=\begin{cases}
     0 & \mbox{if }m_1\ \mbox{is odd} \\
\frac{4(c+\lambda_1^2)^{m_1/2}(c+\lambda_2^2)^{m_2/2}}{\omega_{m_1}   
\omega_{ m_2}}\vol(M) & \mbox{if }m_1\ \mbox{is even} . \end{cases}
\end{equation}

For $g=1,2$, Cartan gives the classification of the complete hypersurfaces, in \cite{Cartan}: embedded hyperspheres or canonical products of hyperspheres, with $\chi=2$ or $4$, as those from Example in Section 4. For $c<0$, we see the complete hypersurfaces are non compact.

We shall now assume $c=1$, for it is then clear how to recover the general case. Also, by Cartan, each and every $\lambda_i\neq0$.

In case the multiplicities are all equal, denoting $\lambda_1=\cotg\epsilon$ for some $0<\epsilon<\pi$, not a multiple of $\pi/g$, then the distinct principal curvatures are given by
\begin{equation}
 \lambda_i=\cotg\Bigl(\epsilon+(i-1)\frac{\pi}{g}\Bigr),\ \ i=1,\ldots,g .
\end{equation}

For $g=3$, Cartan proves in a second article that the associated multiplicities must all be equal: $m_1=m_2=m_3=m$; moreover they can only assume the values $m=1,2,4,8$. He then determines the associated $3m$-manifolds $M_{(3,m)}$, which were later, in celebrated works of H.F.~M\"unzner, further understood as $m$-sphere tubes of standard Veronese embeddings of projective planes, over the canonical division algebras or the Cayley ring, into ${3m+1}$-dimensional spheres.
\begin{teo}
 For $g=3$ and $m=2,4,8$, we have
 \begin{equation} \label{characEulervolumequandogigualatres}
 \chi(M_{(3,m,\lambda)})=\frac{\calp(\lambda)}{(2\pi)^{\frac{3m}{2}}(1-3\lambda^2)^{m}}\vol(M_{(3,m,\lambda)})
 \end{equation}
 with $\calp(\lambda)\in\Z[\lambda]$ an integer polynomial in $\lambda=\lambda_1$ of degree $3m$. In particular, for $m=2$, we have
 \begin{equation} \label{fantasticincredibleformulaorcoincidence}
  \chi(M_{(3,2,\lambda)})=\frac{3(1+\lambda^2)^3}{\pi^3(1-3\lambda^2)^2}\vol(M_{(3,2,\lambda)}) >0.
 \end{equation}
\end{teo}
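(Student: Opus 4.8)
The plan is to exploit that, for an isoparametric hypersurface, the invariant $\pr$ of \eqref{thedifferentialinvariant} is a \emph{constant} function on $M$ (it is a polynomial in the principal curvatures, which are constant), so that \eqref{CaractEuler} immediately gives $\chi(M)=\frac{1}{(2\pi)^{l}}\,\pr\,\vol(M)$, where $l=\tfrac{3m}{2}$ since $\dim M=3m$. Everything then reduces to the purely algebraic problem of evaluating the single number $\pr$ on the list of principal curvatures $(\lambda_{1},\lambda_{2},\lambda_{3})$, each repeated $m$ times, and recognising it in the form $\calp(\lambda)/(1-3\lambda^{2})^{m}$ with $\calp\in\Z[\lambda]$ of degree $3m$.

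The device I would use is an integral representation of $\pr$. In the normalisation $c=1$ in force here, for arbitrary reals $\mu_{1},\ldots,\mu_{n}$ with $n=2l$,
\[
 \pr(\mu_{1},\ldots,\mu_{n})=\frac{2^{l}\,l!}{2\pi}\int_{0}^{2\pi}\,\prod_{j=1}^{n}(\cos t+\mu_{j}\sin t)\,\dx t .
\]
To prove it, recall the Gaussian moment identity $(2k-1)!!=\frac{1}{\sqrt{2\pi}}\int_{\R}u^{2k}e^{-u^{2}/2}\,\dx u$; then $\pr=\sum_{p}(2l-2p-1)!!(2p-1)!!\,S_{2p}$ becomes
\[
 \pr=\frac{1}{2\pi}\int_{\R^{2}}e^{-(u^{2}+v^{2})/2}\sum_{p}S_{2p}\,u^{n-2p}v^{2p}\,\dx u\,\dx v ,
\]
and, by $\sum_{k=0}^{n}S_{k}u^{n-k}v^{k}=\prod_{j}(u+v\mu_{j})$ and the symmetry $v\mapsto-v$, this equals $\frac{1}{2\pi}\int_{\R^{2}}e^{-(u^{2}+v^{2})/2}\prod_{j}(u+v\mu_{j})\,\dx u\,\dx v$; polar coordinates $u=\rho\cos t$, $v=\rho\sin t$ together with $\int_{0}^{\infty}\rho^{2l+1}e^{-\rho^{2}/2}\,\dx\rho=2^{l}l!$ then yield the formula. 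One checks it against the $n=2$ and $n=4$ Pfaffians displayed in Section~4 with $c=1$.

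For $g=3$ I would write $\lambda_{i}=\cotg\theta_{i}$ with $\theta_{i}=\epsilon+(i-1)\tfrac{\pi}{3}$. Since $\cos t+\cotg\theta_{i}\,\sin t=\sin(\theta_{i}+t)/\sin\theta_{i}$, the product identity $\sin3\psi=4\sin\psi\,\sin(\psi+\tfrac{\pi}{3})\,\sin(\psi+\tfrac{2\pi}{3})$ collapses the triple product to $\prod_{i=1}^{3}(\cos t+\lambda_{i}\sin t)=\sin(3\epsilon+3t)/\sin3\epsilon$, whence $\prod_{j=1}^{n}(\cos t+\lambda_{j}\sin t)=\bigl(\sin(3\epsilon+3t)/\sin3\epsilon\bigr)^{m}$. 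Substituting into the integral representation and applying Wallis' formula $\int_{0}^{2\pi}\sin^{m}s\,\dx s=2\pi\,(m-1)!!/m!!$ (valid since $m$ is even) gives $\pr=2^{l}l!\,(m-1)!!\,/\bigl(m!!\,\sin^{m}3\epsilon\bigr)$. Next, $1/\sin^{m}3\epsilon=\bigl(1+(\cotg3\epsilon)^{2}\bigr)^{m/2}$, the triple-angle identity $\cotg3\epsilon=(\lambda^{3}-3\lambda)/(3\lambda^{2}-1)$, and the algebraic identity $(3\lambda^{2}-1)^{2}+(\lambda^{3}-3\lambda)^{2}=(1+\lambda^{2})^{3}$ combine to give $1/\sin^{m}3\epsilon=(1+\lambda^{2})^{3m/2}/(1-3\lambda^{2})^{m}$. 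With $(m-1)!!/m!!=\binom{m}{m/2}/2^{m}$ and $l=\tfrac{3m}{2}$ this produces
\[
 \chi(M)=\frac{\pr}{(2\pi)^{l}}\,\vol(M)=\frac{2^{m/2}\,(3m/2)!\,\binom{m}{m/2}\,(1+\lambda^{2})^{3m/2}}{(2\pi)^{3m/2}\,(1-3\lambda^{2})^{m}}\,\vol(M),
\]
so one takes $\calp(\lambda)=2^{m/2}(3m/2)!\binom{m}{m/2}(1+\lambda^{2})^{3m/2}$; its prefactor is a positive integer and $(1+\lambda^{2})^{3m/2}$ has integer coefficients (as $m$ is even), so $\calp\in\Z[\lambda]$ is of degree $3m$. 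For $m=2$ the prefactor is $2\cdot3!\cdot\binom{2}{1}=24$, so $\calp(\lambda)=24(1+\lambda^{2})^{3}$ and \eqref{fantasticincredibleformulaorcoincidence} follows; this is $>0$ because $1+\lambda^{2}>0$, while $\epsilon$ not being a multiple of $\pi/3$ forces $3\epsilon\notin\pi\Z$, hence $\sin3\epsilon\neq0$ and $3\lambda^{2}-1\neq0$.

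I expect the integral representation of $\pr$ to be the main obstacle: the double-factorial weights $(2l-2p-1)!!(2p-1)!!$ are precisely what blocks a direct closed evaluation of $\pr$ on this Clifford-type list of eigenvalues, and only after absorbing them into $\frac{2^{l}l!}{2\pi}\int_{0}^{2\pi}\prod_{j}(\cos t+\lambda_{j}\sin t)\,\dx t$ does the $g=3$ case collapse via the two triple-angle identities. A lesser point requiring care is the bookkeeping of constants, so that the coefficient of $(1+\lambda^{2})^{3m/2}$ turns out to be an integer.
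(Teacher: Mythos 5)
Your argument is correct, but it follows a genuinely different route from the paper. The paper works purely with symmetric functions: writing $A=\lambda_2+\lambda_3=\frac{8\lambda}{1-3\lambda^2}$ and $B=\lambda_2\lambda_3=\frac{\lambda^2-3}{1-3\lambda^2}$, it expands $S_{2p}$ of the $m$-fold repeated eigenvalue list as polynomials in $\lambda,A,B$ (an induction), which gives $\calp\in\Z[\lambda]$ abstractly, and then obtains \eqref{fantasticincredibleformulaorcoincidence} by explicit computation of $S_2,S_4,S_6$ for $m=2$. You instead prove the closed-form moment identity $\pr(\mu_1,\ldots,\mu_n)=\frac{2^l l!}{2\pi}\int_0^{2\pi}\prod_j(\cos t+\mu_j\sin t)\,\dx t$ (your Gaussian-moment derivation is sound: the double factorials are exactly the even Gaussian moments, odd powers of $v$ integrate to zero, and polar coordinates give the radial factor $2^l l!$; it checks against the $n=2,4$ Pfaffians), then collapse the product via $\cos t+\cotg\theta_i\sin t=\sin(\theta_i+t)/\sin\theta_i$ and $\sin3\psi=4\sin\psi\sin(\psi+\tfrac{\pi}{3})\sin(\psi+\tfrac{2\pi}{3})$, and finish with Wallis and the triple-angle identity $(3\lambda^2-1)^2+(\lambda^3-3\lambda)^2=(1+\lambda^2)^3$. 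I verified numerically (e.g. $\epsilon=\pi/4$, $m=2$ gives $\pr=48=24(1+\lambda^2)^3/(1-3\lambda^2)^2$) and your constants are right: $\calp(\lambda)=2^{m/2}(3m/2)!\binom{m}{m/2}(1+\lambda^2)^{3m/2}$, which for $m=2$ reproduces \eqref{fantasticincredibleformulaorcoincidence} exactly, including positivity. What your approach buys is considerably more than the statement: it yields an explicit $\calp$ for all $m=2,4,8$, shows it is a polynomial in $\lambda^2$ and indeed a power of $1+\lambda^2$, and thus answers the questions raised in the paper immediately after the theorem (the "more structural approach", the form of $\calp$ for $m=4,8$, and the invariance of $\calp(\lambda)/(1-3\lambda^2)^m$); combined with $\chi=6$ it also gives the volume formula in closed form. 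The paper's route, by contrast, is more elementary (no integral representation needed) but only delivers the qualitative integrality statement for general $m$ plus the $m=2$ case by brute force.
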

\begin{proof}
 Using basic trigonometry identities we deduce
 \[  \lambda_2=\frac{\lambda-\sqrt{3}}{\sqrt{3}\lambda+1},
 \qquad  \lambda_3=\frac{\lambda+\sqrt{3}}{1-\sqrt{3}\lambda} .  \]
 Then we define 
 \[  A=\lambda_2+\lambda_3= \frac{8\lambda}{1-3\lambda^2},\qquad B=\lambda_2\lambda_3=\frac{\lambda^2-3}{1-3\lambda^2} , \]
 and notice every symmetric polynomial $S_{2p}$ in the Pfaffian polynomial from \eqref{CaractEuler} in the $3m$ variables $\lambda,\ldots,\lambda,\lambda_2,\ldots,\lambda_2,\lambda_3,\ldots,\lambda_3$ becomes an integer polynomial in the three variables $\lambda,A,B$. Indeed, we have $S_{2p}(\lambda,\ldots,\lambda_3)=\sum_{j=0}^{m'}\binom{m}{j}\lambda^jS_{2p-j}(\lambda_2,\ldots,\lambda_3)$, where $m'=\min\{m,2p\}$. Now each of those coefficients, indeed symmetric polynomial $S_{2p-j}$, is a polynomial in $A,B$, a result which is easily proved by induction. So the claim follows with the numerator in \eqref{characEulervolumequandogigualatres} in $\Z[\lambda]$. For instance, in case $m=2$, we have the formulas
 \begin{align*}
 & \qquad \qquad  S_0=1,   \qquad   S_2=\lambda_1^2+\lambda_2^2+\lambda_3^2+4(\lambda_1\lambda_2+\lambda_2\lambda_3+\lambda_1\lambda_3)=\lambda^2+4\lambda A+A^2+2B , \\
 &   S_4 =\lambda_1^2\lambda_2^2+\lambda_1^2\lambda_3^2+\lambda_2^2\lambda_3^2+4(\lambda_1^2\lambda_2\lambda_3+\lambda_1\lambda_2^2\lambda_3+\lambda_1\lambda_2\lambda_3^2) =\lambda^2(A^2+2B)+4\lambda AB+B^2
 \end{align*}
 and $S_6=\lambda^2B^2$, which imply the result in \eqref{fantasticincredibleformulaorcoincidence} by not so long computations.
\end{proof}
Of course, the simple form of $\calp$ found for $m=2$ raises the question if a more structural approach should be searched. And if $\calp$ is always a polynomial in $\lambda^2$. Notice we have not had the input from any preferred principal curvature. Due to both sides of \eqref{characEulervolumequandogigualatres}, we must conclude $\calp(\lambda)/(1-3\lambda^2)^m$ is an invariant of the $\lambda_i$. If this is quite clear in case $m=2$, it may help in determining $\calp$ for $m=4,8$.

Perhaps the relevant observation is that the left hand side of \eqref{characEulervolumequandogigualatres} is already known and so we may find a formula for $\vol$ in terms of a single parameter $\lambda$. Indeed, for the division algebras or the octonians $\F=\C,\Hamil,\Octoni$, it is known that $\chi(\F\Proj^2)=3$, so in view of the bundle structure the integer invariant is always $6$.

In case $m=2$, we obtain a unique absolute minimum value, 1, of the function $(1+\lambda^2)^3/(1-3\lambda^2)^2$, at points $0,\pm\sqrt{3}$, which are not admissible $\lambda$ for the construction. It is interesting that
\begin{equation}
 \lim_{\lambda\rightarrow0,\pm\sqrt{3}}
 \vol(M_{(3,2,\lambda)})=2\pi^3 
\end{equation}
is greater than $\omega_7=\pi^4/3$ of the ambient sphere.

We proceed to the following number of distinct principal curvatures. M\"unzner proved that the only possible values for $g$ are $1,2,3,4,6$. And, regarding multiplicities, he deduced the general identity $m_i=m_{i+2}\mod g$, for all $i$. Hence, for $g=4,6$, there are at most two distinct multiplicities, say $m_1$ and $m_2$.

Letting $g=4$, we may compute Euler characteristics with the above technique.
A problem we must leave here is the case of $m_1\neq m_2$ or equal $m_i$ but $>2$.
\begin{teo}  \label{teo_characEulerIsoparametric_g_quatro_m_dois}
	(i) In case $g=4$ and $m_1=m_2=1$, we have $\chi(M)=0$.
	 
\noindent (ii) In case $g=4$ and $m_1=m_2=2$, we have
 \begin{equation}  \label{characEulerIsoparametric_g_quatro_m_dois}
  \chi(M)=\frac{3(\lambda^8-116\lambda^6+316\lambda^4-116\lambda^2+1)}{4\pi^4\lambda^2(1-\lambda^2)^2}\vol(M).
 \end{equation}
 In particular $\chi(M)\neq0$.
\end{teo}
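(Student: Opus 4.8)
The plan is to evaluate the (constant) value of $\pr$ on $M$ and then read off $\chi(M)=\frac{1}{2^l\pi^l}\int_M\pr\,\vol=\frac{\pr}{2^l\pi^l}\vol(M)$ from \eqref{CaractEuler}. Two elementary remarks do all the work. Since the $g=4$ principal curvatures are $\lambda_i=\cotg(\epsilon+(i-1)\tfrac{\pi}{4})$ and $\epsilon+(i+1)\tfrac{\pi}{4}=(\epsilon+(i-1)\tfrac{\pi}{4})+\tfrac{\pi}{2}$, the cotangent addition formula gives $\lambda_{i+2}=-1/\lambda_i$; hence, with $c=1$,
\[ \lambda_1\lambda_3=\lambda_2\lambda_4=-1,\qquad u:=\lambda_1+\lambda_3=2\cotg2\epsilon,\quad v:=\lambda_2+\lambda_4=-2\tan2\epsilon,\quad uv=-4. \]
Writing $\lambda=\lambda_1=\cotg\epsilon$ and $s:=u+v$, the same relations give $u=\frac{\lambda^2-1}{\lambda}$, $v=\frac{-4\lambda}{\lambda^2-1}$, and so $s=\frac{\lambda^4-6\lambda^2+1}{\lambda(\lambda^2-1)}$. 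These are the $g=4$ analogues of the trigonometric identities used for $g=3$ in \eqref{characEulervolumequandogigualatres}.

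For (i): $n=4$, $l=2$, and each $\lambda_i$ is simple, so $\det(1+xA)=\prod_{i=1}^4(1+x\lambda_i)=(1+ux-x^2)(1+vx-x^2)$; comparing coefficients, $S_2=uv-2=-6$ and $S_4=(\lambda_1\lambda_3)(\lambda_2\lambda_4)=1$. By \eqref{thedifferentialinvariant} with $l=2$ and $c=1$ one has $\pr=3+S_2+3S_4=3-6+3=0$, hence $\chi(M)=0$. (Equivalently: in \eqref{GaussCodazzimatrix} the $(1,3)$ and $(2,4)$ entries vanish, so $\pfaff(\Omega)=(\Omega_{12}\wedge\Omega_{34}+\Omega_{23}\wedge\Omega_{14})$, whose scalar coefficient is $2+uv+2(\lambda_1\lambda_3)(\lambda_2\lambda_4)=2-4+2=0$; this also matches the fact that for $m_1=1$ the hypersurface is an $S^1$-bundle over a focal manifold.)

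For (ii): $n=8$, $l=4$, and every $\lambda_i$ has multiplicity $2$, so using $uv=-4$,
\[ \det(1+xA)=\prod_{i=1}^4(1+x\lambda_i)^2=\bigl[(1+ux-x^2)(1+vx-x^2)\bigr]^2=P(x)^2,\qquad P(x)=x^4-sx^3-6x^2+sx+1. \]
Extracting the even-degree coefficients of $P(x)^2$ gives $S_0=S_8=1$, $S_2=S_6=s^2-12$, $S_4=38-2s^2$. Feeding these into \eqref{CaractEuler} with $l=4$, whose coefficients $(2l-2p-1)!!(2p-1)!!$ are $105,15,9,15,105$ for $p=0,\dots,4$, gives $\pr=105+15(s^2-12)+9(38-2s^2)+15(s^2-12)+105=12(s^2+16)$. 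Substituting $s=\frac{\lambda^4-6\lambda^2+1}{\lambda(\lambda^2-1)}$, the numerator of $s^2+16$ over the common denominator $\lambda^2(1-\lambda^2)^2$ is $(\lambda^4-6\lambda^2+1)^2+16\lambda^2(\lambda^2-1)^2$, which simplifies to the perfect fourth power $(\lambda^2+1)^4$. Dividing $\pr$ by $2^l\pi^l=16\pi^4$ then gives the closed formula $\chi(M)=\frac{3(\lambda^2+1)^4}{4\pi^4\lambda^2(1-\lambda^2)^2}\vol(M)$ of \eqref{characEulerIsoparametric_g_quatro_m_dois}; since the numerator is a non-vanishing fourth power and the denominator is positive for the admissible range $\lambda>1$, we conclude $\chi(M)\neq0$ (indeed $>0$).

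The genuine content is confined to the two trigonometric identities above together with the observation that $\lambda_1\lambda_3=\lambda_2\lambda_4=-c$ makes $\det(1+xA)$ factor through the quadratics $1+ux-x^2$ and $1+vx-x^2$; granting this, both parts are mechanical, the only mild obstacle being the bookkeeping in the final substitution and simplification. For a fully rigorous statement one should also invoke M\"unzner's theorem that for $g=4$ the principal curvatures are exactly the four distinct values $\lambda_i$ with the stated multiplicities, which legitimises writing $\det(1+xA)=\prod(1+x\lambda_i)^{m_i}$. Without the pairing trick, part (ii) would instead require expanding $e_{2p}$ of the eight-element multiset of $\lambda_i$'s directly — elementary, but considerably more laborious, and essentially the route taken for the $g=3$ case.
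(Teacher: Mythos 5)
Part (i) is correct and essentially the paper's own argument. For part (ii) your route is also the paper's route (express $\lambda_2,\lambda_3,\lambda_4$ through $\lambda$, compute the even elementary symmetric functions of the eight curvatures, then feed them into \eqref{thedifferentialinvariant} and \eqref{CaractEuler}), only streamlined by the pairing $\lambda_{i+2}=-1/\lambda_i$ and the factorization $\det(1+xA)=\bigl[(1+ux-x^2)(1+vx-x^2)\bigr]^2$ with $uv=-4$; indeed your intermediate values $S_0=S_8=1$, $S_2=S_6=s^2-12$, $S_4=38-2s^2$ coincide exactly with the rational functions of $\lambda$ displayed in the paper's proof.

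The point that needs flagging is that your final formula, $\chi(M)=\frac{3(1+\lambda^2)^4}{4\pi^4\lambda^2(1-\lambda^2)^2}\vol(M)$, is not the printed \eqref{characEulerIsoparametric_g_quatro_m_dois}, so strictly you have not proved the statement as it stands --- but the discrepancy appears to be on the paper's side, not yours. With $l=4$ the coefficients in \eqref{thedifferentialinvariant} are $105,15,9,15,105$, so $\pr=210+30S_2+9S_4=12(s^2+16)$ as you assemble it; the paper's proof instead uses $\pr=210+90S_2+9S_4$, and even that does not reproduce the printed numerator (it would give $6\lambda^8-116\lambda^6+316\lambda^4-116\lambda^2+6$ rather than $\lambda^8-116\lambda^6+316\lambda^4-116\lambda^2+1$). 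A concrete cross-check: at the minimal member $\lambda=1+\sqrt{2}$ the eight curvatures are $\pm(1+\sqrt{2}),\pm(\sqrt{2}-1)$ each doubled, $\det(1+xA)=(1-6x^2+x^4)^2$, hence $S_2=S_6=-12$, $S_4=38$ and $\pr=192=12\cdot16$, in agreement with your $(1+\lambda^2)^4$ expression, while the printed numerator is negative there. Your version is also the only one compatible with the topology recalled later in the paper ($M$ is an $S^2$-bundle over $\C\Proj^3$, so $\chi=8>0$ for every member of the parallel family): the printed polynomial changes sign on the admissible range, which would even contradict the paper's own parallel-deformation argument for $\chi\neq0$. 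A bonus of your computation is that $\chi>0$ is immediate from positivity of the right-hand side, with no need for that deformation argument. Two minor quibbles: positivity should be argued for all admissible $\lambda\neq0,\pm1$ (which curvature is called $\lambda_1$ is a choice), not just $\lambda>1$ --- though your expression is manifestly positive there too; and the appeal to M\"unzner is harmless but not needed beyond the $\pi/4$-spacing of the curvatures already quoted in the paper.
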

\begin{proof}
As above, we denote $\lambda_1$ by $\lambda$. By simple trigonometry,
\[  \lambda_2= \frac{\lambda-1}{\lambda+1},\qquad\lambda_3= -\frac{1}{\lambda},\qquad\lambda_4= -\frac{1}{\lambda_2}=\frac{\lambda+1}{1-\lambda}  \]
Therefore the symmetric polynomials are simplified, first by the two last helpful identities, and then by substituting $\lambda_2$.
	
When the multiplicities are equal to 1, we find $\chi=3S_0+S_2+3S_4=0$ because $S_0=1$, $S_4=\lambda_1\lambda_2\lambda_3\lambda_4=1$ and further computations yield
\[ S_2= \lambda_1\lambda_2+\lambda_1\lambda_3+ \lambda_1\lambda_4+\lambda_2\lambda_3+\lambda_2\lambda_4+\lambda_3\lambda_4=-6 .  \] 
 If $m_1=m_2=2$, new computations follow:
 \begin{align*}
  S_2&=\lambda_1^2+\lambda_2^2+\frac{1}{\lambda_1^2}+\frac{1}{\lambda_2^2}+4\biggl(\lambda_1\lambda_2-1-\frac{\lambda_1}{\lambda_2}-\frac{\lambda_2}{\lambda_1}-1+\frac{1}{\lambda_1\lambda_2}\biggr)  \\
  &= \frac{1}{\lambda^2(1-\lambda^2)^2}(\lambda^8-24\lambda^6+62\lambda^4-24\lambda^2+1)\\
  S_4&=\lambda_1^2\lambda_2^2+1+\frac{\lambda_1^2}{\lambda_2^2}+\frac{\lambda_2^2}{\lambda_1^2}+1+\frac{1}{\lambda_1^2\lambda_2^2}+4\biggl(\frac{\lambda_2}{\lambda_1}+\frac{\lambda_1}{\lambda_2}-\lambda_2^2-\frac{1}{\lambda_2^2}-\lambda_1\lambda_2 \\
  & \qquad \ \  -\frac{1}{\lambda_1\lambda_2}-\lambda_1\lambda_2-\frac{1}{\lambda_1\lambda_2}-\lambda_1^2-\frac{1}{\lambda_1^2}+\frac{\lambda_1}{\lambda_2}+\frac{\lambda_2}{\lambda_1}\biggr) + 16   \\
  &= \frac{1}{\lambda^2(1-\lambda^2)^2}(-2\lambda^8+62\lambda^6-152\lambda^4+62\lambda^2-2).
 \end{align*}
 Clearly $S_0=S_8=1$, and it is easy to see $S_6=S_2$ because in each summand of $S_6$ there are always two pairs of principal curvatures $\lambda_i,-\inv{\lambda_i}$, which cancel, leaving an intact $S_2$. Finally we recur to \eqref{thedifferentialinvariant}, which yields $\pr=210+90S_2+9S_4$, and recall \eqref{CaractEuler}.
 
 The conclusion that $\chi\neq0$ follows by contradiction. Suppose an 
isoparametric hypersurface is given, with $\lambda_0$ a zero of 
\eqref{characEulerIsoparametric_g_quatro_m_dois}, isolated. We may always 
continuously produce a 1-parameter family of homeomorphic so called parallel 
hypersurfaces, which are still isoparametric of the same type, just by 
varying $\lambda$ in a sufficiently small interval.
\end{proof}

In case (i) of the theorem we have the example of homogeneous isoparametric hypersurface given by the isotropy representation of $\SO(5)/(\SO(3)\times\SO(2))$, cf. \cite[p. 24]{Chi}. Indeed $M$ is a quotient of $\SO(3)\times\SO(2)$ and therefore $\chi(M)=0$.

In case (ii), we may virtually have $\chi$ of any sign. The case considered is classified, cf. \cite{Chi,ASiffert,TangXieYan}, and we know there exists precisely one such hypersurface $M$ up to parallel homotopy, which is homogeneous. We add that it must correspond to one of the `chambers' of the $\lambda_i$ designed by the 8 zeros of the right hand side of \eqref{characEulerIsoparametric_g_quatro_m_dois}.

Since the zeros of \eqref{characEulerIsoparametric_g_quatro_m_dois} are not attained at $0$, $\pm1$ or $\pm\infty$, where $M$ has volume as small as possible, it should be interesting to understand the extreme cases within each bounded end.

As Z.~Tang explained to the author, and indeed consulting \cite{TangXieYan}, the class of the manifold $M$ from Theorem \ref{teo_characEulerIsoparametric_g_quatro_m_dois} (ii) is constructed as a 2-sphere bundle over $\C\Proj^3$. Henceforth we have just
\begin{equation}
  \vol(M)=\frac{32\pi^4\lambda^2(1-\lambda^2)^2}{3(\lambda^8-116\lambda^6+316\lambda^4-116\lambda^2+1)} .
\end{equation}

Finally we consider the case $g=6$. Abresch also discovered that $m_1=m_2$ and that this number $\leq2$. 

Isoparametric hypersurfaces with $g=6$ and multiplicities $m_1=m_2=1$ have vanishing $\chi$. This assertion can be proved by recalling the circle tube structure. Or we may recall the principal curvatures differ by multiples of $\pi/6$ and thus satisfy $\lambda_{i+3}=-\frac{1}{\lambda_i}$ with $i$ mod 6. It is then easy to see $S_4=-S_2$ and $S_6=-S_0=-1$ and hence that $\pr$ must vanish.

The previous result is again consistent with the known homogeneous example induced by $\Gdois/\SO(4)$ via isotropy representation, cf. \cite[p. 24]{Chi}.

\begin{teo}
 In case $g=6$ and $m_1=m_2=2$, we find
 \begin{equation}  \label{characEulerIsoparametric_g_seis_m_dois}
  \chi(M)=\frac{90(1+\lambda^2)^6}{\pi^6\lambda^2(1-3\lambda^2)^2(3-\lambda^2)^2}\vol(M) >0.
 \end{equation}
\end{teo}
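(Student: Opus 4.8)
The plan is to carry out the same computation as in the two preceding theorems, now with $g=6$ and $m_1=m_2=2$, so that $n=2l=12$ and $l=6$, and with $c=1$. Write the six distinct principal curvatures as $\lambda_i=\cotg(\epsilon+(i-1)\pi/6)$, $i=1,\dots,6$, where $\lambda=\lambda_1=\cotg\epsilon$ and $\epsilon$ is not a multiple of $\pi/6$. The cotangent addition formula expresses $\lambda_2,\lambda_3$ as rational functions of $\lambda$ (as in the $g=3$ proof, with the role of its ``$\lambda$'' now played by $\cotg2\epsilon$), while the quarter turn $\pi/2=3\cdot(\pi/6)$ gives $\lambda_{i+3}=-1/\lambda_i$ for $i=1,2,3$. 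In particular $c+\lambda_i\lambda_{i+3}=1-1=0$, so in the Gauss--Codazzi matrix \eqref{GaussCodazzimatrix} --- built from the twelve curvatures $\lambda,\lambda,\lambda_2,\lambda_2,\dots,\lambda_6,\lambda_6$ --- the three $2\times2$ blocks coupling $\lambda_i$ with $\lambda_{i+3}$ vanish; this is the same mechanism behind the case $g=2$ and behind \eqref{specialcurvature}, and it is the structural reason to expect a clean answer.

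Rather than compute the Pfaffian directly I would go through $\pr$, using \eqref{thedifferentialinvariant} and \eqref{CaractEuler}, so that $\chi(M)=\frac{1}{2^6\pi^6}\,\pr\,\vol(M)$ with $\pr=\sum_{p=0}^{6}(11-2p)!!\,(2p-1)!!\,S_{2p}$. The relation $\lambda_{i+3}=-1/\lambda_i$ makes the multiset of the twelve curvatures invariant under $\mu\mapsto-1/\mu$, which (their product being $1$) forces $S_{2p}=S_{12-2p}$; combined with the coefficient symmetry $A_{6,p}=A_{6,6-p}$ this collapses $\pr$ to $\pr=2\cdot11!!+2\cdot9!!\,S_2+2\cdot(7!!\,3!!)\,S_4+(5!!)^2S_6=20790+1890\,S_2+630\,S_4+225\,S_6$. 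To obtain the $S_{2j}$, use the factorization $\det(1+xA)=\prod_{i=1}^{6}(1+x\lambda_i)^2=\bigl(\prod_{i=1}^{3}(1+p_ix-x^2)\bigr)^2$ with $p_i=\lambda_i-\lambda_i^{-1}=2\cotg(2\epsilon+(i-1)\pi/3)$: the $p_i/2$ are three cotangents in arithmetic progression of step $\pi/3$, hence the roots of $t^3-3(\cotg6\epsilon)t^2-3t+\cotg6\epsilon$, so that $e_1:=p_1+p_2+p_3=6\cotg6\epsilon$, $\;e_2:=\sum_{i<j}p_ip_j=-12$, $\;e_3:=p_1p_2p_3=-8\cotg6\epsilon=-\tfrac43e_1$. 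Therefore, writing $E:=e_1=\frac{3(\lambda^2-1)(\lambda^4-14\lambda^2+1)}{\lambda(1-3\lambda^2)(3-\lambda^2)}$, the inner cubic is $q(x)=1+Ex-15x^2-\tfrac{10}{3}Ex^3+15x^4+Ex^5-x^6$, whence $S_2,S_4,S_6$ are explicit quadratics in $E$ and, after collecting, $\pr=640\,(E^2+36)$.

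The whole theorem then reduces to the single polynomial identity $(u-1)^2(u^2-14u+1)^2+4u(1-3u)^2(3-u)^2=(1+u)^6$ in $u=\lambda^2$ (both sides of degree $6$, so a routine verification), which turns $E^2+36$ into $\dfrac{9(1+\lambda^2)^6}{\lambda^2(1-3\lambda^2)^2(3-\lambda^2)^2}$ and yields $\chi(M)=\dfrac{5760}{64}\cdot\dfrac{(1+\lambda^2)^6}{\pi^6\lambda^2(1-3\lambda^2)^2(3-\lambda^2)^2}\vol(M)$, i.e. \eqref{characEulerIsoparametric_g_seis_m_dois}. The only genuine obstacle is the bookkeeping --- the trigonometric reduction to $e_1,e_2,e_3$ and the final algebraic collapse --- which is kept in check by the symmetries $A_{6,p}=A_{6,6-p}$ and $S_{2p}=S_{12-2p}$, and which can be cross-checked by replacing the step $\pi/6$ by $\pi/3$ throughout, a specialization that must reproduce \eqref{fantasticincredibleformulaorcoincidence}. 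Finally, positivity is immediate: the numerator $90(1+\lambda^2)^6$ is strictly positive, and since $\epsilon$ is not a multiple of $\pi/6$ we have $\lambda\notin\{0,\pm\tfrac{1}{\sqrt3},\pm\sqrt3\}=\{\cotg(k\pi/6):1\le k\le5\}$, so $\lambda^2(1-3\lambda^2)^2(3-\lambda^2)^2>0$ and hence $\chi(M)>0$.
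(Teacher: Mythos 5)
Your proof is correct, and although it shares the paper's outer framework --- compute $\chi(M)=\frac{1}{2^6\pi^6}\,\pr\,\vol(M)$ from \eqref{CaractEuler} and \eqref{thedifferentialinvariant}, and use $\lambda_{i+3}=-1/\lambda_i$ to get $S_{12-2p}=S_{2p}$, collapsing $\pr$ to $20790+1890S_2+630S_4+225S_6$ --- the heart of the computation is genuinely different. The paper substitutes the explicit rational expressions of $\lambda_2,\lambda_3$ in $\lambda$, expands $S_2,S_4,S_6$ over the common denominator $\lambda^2(\lambda^2-3)^2(3\lambda^2-1)^2$ via the auxiliary quantities $s,t,p,\overline{p}$, and obtains the final collapse to $5760(1+\lambda^2)^6/\lambda^2s^2t^2$ with computer assistance. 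You instead pair $\lambda_i$ with $\lambda_{i+3}$ to factor $\det(1+xA)=q(x)^2$ with $q(x)=\prod_{i=1}^3(1+p_ix-x^2)$, note $p_i=2\cotg(2\epsilon+(i-1)\tfrac{\pi}{3})$, and use the cubic $t^3-3\cotg(6\epsilon)t^2-3t+\cotg 6\epsilon$ satisfied by cotangents in arithmetic progression of step $\pi/3$, so that everything depends on the single invariant $E=6\cotg 6\epsilon$. I checked the pivotal steps: your $S_2=E^2-30$, $S_4=255-\tfrac{20}{3}E^2$, $S_6=\tfrac{118}{9}E^2-452$ reproduce exactly the paper's three long polynomials, the collection gives $\pr=640(E^2+36)$, the sextic identity $(u-1)^2(u^2-14u+1)^2+4u(1-3u)^2(3-u)^2=(1+u)^6$ holds, and $5760/64=90$, so your result agrees with \eqref{characEulerIsoparametric_g_seis_m_dois}. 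What your route buys is that no computer algebra is needed, and the closed form $\pr=640\bigl((6\cotg 6\epsilon)^2+36\bigr)$ makes both the positivity and the invariance under the parallel family $\epsilon\mapsto\epsilon+\mathrm{const}$ transparent, going some way toward the deeper understanding the paper asks for after this theorem; your remark that the step-$\pi/3$ specialization recovers \eqref{fantasticincredibleformulaorcoincidence} is a sensible cross-check. Only cosmetic points remain: say explicitly that the three cotangents are pairwise distinct (cotg is injective modulo $\pi$), so Vieta gives $e_1,e_2,e_3$, and that $\lambda\notin\{0,\pm\sqrt{3},\pm 1/\sqrt{3}\}$ precisely because $\epsilon$ is not a multiple of $\pi/6$, which is what your positivity argument uses.
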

\begin{proof}
 Let $\lambda_1=\lambda$ as usual. Due to the step increase by $\pi/6$, we have
 \[  \lambda_2=\frac{\sqrt{3}\lambda-1}{\lambda+\sqrt{3}},\qquad\lambda_3=\frac{\lambda-\sqrt{3}}{\sqrt{3}\lambda+1}, \qquad \lambda_4\lambda_1=\lambda_5\lambda_2=\lambda_6\lambda_3=-1 .  \]
 In order to deal with $\lambda_2$ and $\lambda_3$ we define
 \[ s=\lambda^2-3,\qquad t=3\lambda^2-1,\qquad  p=\sqrt{3}\lambda^2-4\lambda+\sqrt{3},\qquad \overline{p}=\sqrt{3}\lambda^2+4\lambda+\sqrt{3}  \]
 so that a number of most helpful identities occur:
 \[ \lambda_2=\frac{p}{s},\qquad\lambda_3=\frac{p}{t},\qquad p\overline{p}=st=3\lambda^4-10\lambda^2+3 ,\qquad  p-\overline{p}=-8\lambda .  \]
 These enable a more simple treatment of the symmetric polynomials. Indeed, up to a common denominator $\lambda^2s^2t^2$, we may rewrite $S_{2j}$ as polynomials in $\lambda, p\overline{p},p-\overline{p},s+t,s^2+t^2,s^3+t^3,s^4+t^4$. The result is:
 \begin{align*}
   S_2 &=\frac{1}{\lambda^2s^2t^2}\bigl(9(\lambda^{12} + 1)) - 540(\lambda^{10} + \lambda^2) + 4095(\lambda^8 + \lambda^4) - 7608\lambda^6\bigr) \\
     S_4 &=\frac{1}{\lambda^2s^2t^2}\bigl(-60(\lambda^{12} + 1)+4095(\lambda^{10} + \lambda^2) - 30600(\lambda^8 + \lambda^4) +57210\lambda^6\bigr) \\
  S_6 &=\frac{2}{\lambda^2s^2t^2}\bigl(59(\lambda^{12}+1) - 3804(\lambda^{10}+\lambda^2) + 28605(\lambda^8+\lambda^4) - 53336 \lambda^6 \bigr).
   \end{align*} 
 With $S_8$ we notice that 2 pairs of principal curvatures with indices $i$ 
and $i+3$ mod 6 must lie within each summand. Then a simple cancellation 
yields an $S_4$. The same happens with 4 and 6 pairs for $S_{10}=S_2$ and 
$S_{12}=S_0=1$, respectively.  Therefore we have
 \begin{align*}
  \pr &= 11!!(S_0+S_{12})+9!!(S_2+S_{10})+7!!3!!(S_4+S_8)+5!!5!!S_6 \\
  &= 10395\cdot2+945\cdot2S_2+315\cdot2S_4+225S_6 \\
  &=\frac{2}{\lambda^2s^2t^2}\biggl(10395(9(\lambda^{10} + \lambda^2) - 60(\lambda^8 + \lambda^4) + 118\lambda^6)+ \\
  & \qquad + (945\cdot9-315\cdot60+225\cdot59)(\lambda^{12}+1) \\
  & \qquad + (-945\cdot540+315\cdot4095-225\cdot3804)(\lambda^{10}+\lambda^2)\\ & \qquad +(945\cdot4095-315\cdot30600+225\cdot28605)(\lambda^{8}+\lambda^4) \\ & \qquad +(-945\cdot7608+315\cdot57210-225\cdot53336)\lambda^6\biggr)  \\
  & = \frac{2}{\lambda^2s^2t^2}\cdot2880(1 + \lambda^2)^6  .
 \end{align*}
 Of course we have recurred to a computer program for these very last computations and simplification.
\end{proof}
 Due to the surprising reduction of the polynomial, as we had seen before with \eqref{fantasticincredibleformulaorcoincidence}, a quest is clearly set to understand more deeply the theorem above.

\vspace{7mm}

The author acknowledges the fruitful conversations with Jos\'e Navarro, from 
the U.~Extremadura, in Portugal and Spain, with Zizhou Tang, U. Nankai, at CIM, and with Miguel Dom\'\i nguez V\'azquez, U. Santiago de Compostela, which helped to improve some of the conclusions above. He also thanks deeply an anonymous Referee.

Parts of this article were written while the author was a visitor at Chern Institute of Mathematics, Tianjin, China. He warmly thanks CIM for the excellent conditions provided and for the opportunity to visit such an inspiring campus of the University of Nankai.

The author dedicates this work to the commemorations of the 500th anniversary of the first global circumnavigation voyage by the Portuguese and Spanish navigators.

\vspace{5mm}

\vspace{14mm}

\textsc{R. Albuquerque}\ \ \ \textbar\ \ \ 
{\texttt{rpa@uevora.pt}}

Centro de Investiga\c c\~ao em Mate\-m\'a\-ti\-ca e Aplica\c c\~oes

Rua Rom\~ao Ramalho, 59, 671-7000 \'Evora, Portugal

Research leading to these results has received funding from Funda\c 
c\~ao para a Ci\^encia e a Tecnologia (UID/MAT/04674/2013).


\begin{thebibliography}{30}


\bibitem{Abresch}
U.~Abresch,
\emph{Isoparametric hypersurfaces with four or six distinct principal 
curvatures},
Math. Ann. 264, 1983, 283--302.



\bibitem{Alb2012}
R.~Albuquerque,
\emph{A fundamental differential system of Riemannian geometry},
Rev. Mat. Iberoam, vol. 35, Issue 7, 2019, 2221--2250.


\bibitem{Alb2015a}
---,
\emph{A fundamental differential system of 3-dimensional Riemannian geometry},
Bull. Sci. Math. 143, 2018, 82--107.


\bibitem{Alb2018}
---, \emph{On the Minkowski identities on hypersurfaces of space forms}, 
Diff. Geom. and its Appl., Volume 67, December 2019, 101561.


\bibitem{AliasBrasilPerdomo}
L.~Al\'\i as, A.~Brasil and O.~Perdomo,
\emph{A Characterization of Quadric Constant Mean Curvature Hypersurfaces of 
Spheres},
J. Geom. Analysis 18, 2008, 687--703.

\bibitem{Allendoerfer}
C.~Allendoerfer,
{\em Steiner's formulae on a general S$^{n+1}$},
Bull. Amer. Math. Soc. 54, 1948, 128--135.

\bibitem{Allen-Weil}
C.~Allendoerfer and A.~Weil,
\emph{The Gauss-Bonnet theorem for Riemannian polyhedra},
Trans. Amer. Math. Soc. vol. 53 (1943), 101--129.

\bibitem{Belo}
M.~Belolipetsky,
\emph{Hyperbolic orbifolds of small volume},
Proceedings of ICM 2014, Vol.2, 837--851.


\bibitem{BGG}
R.~Bryant, Ph.~Griffiths and D.~Grossman,
\emph{Exterior differential systems and {E}uler­-{L}agrange partial 
differential equations},
University of Chicago Press, 2003.

\bibitem{Cartan}
\'E.~Cartan,
\emph{Familles de surfaces isoparam\'etriques dans les espaces \`a courbure 
constante},
Ann. Mat. Pura Appl. 17 1938, 177--191.

\bibitem{Chern}
S.S.~Chern, 
{\em A Simple Intrinsic Proof of the Gauss-Bonnet Formula for
Closed Riemannian Manifolds},
Ann. Math. 45 (4), 1944, 747--752.

\bibitem{Chi}
Q. Chi, 
\emph{The isoparametric story, a heritage of \'Elie Cartan}, to appear in Advanced Lectures in Mathematics, http://arxiv.org/abs/2007.02137.


\bibitem{KellerhalsZehrt}
R.~Kellerhals and T.~Zehrt,
{\em The Gauss-Bonnet Formula for Hyperbolic Manifolds of Finite Volume},
Geom. Dedicata 84 (1-3), 2001, 49--62.


\bibitem{MilnorStasheff}
J.~Milnor and J.~Stasheff,
{\em Characteristic classes},
Annals Math. Stud., vol. 76. Princeton University Press, Princeton, 1974.

\bibitem{NavNav}
A.~Navarro and J.~Navarro,
{\em Uniqueness of the Gauss-Bonnet-Chern formula (after Gilkey-Park-Sekigawa)},
{J. Geom. Phys.} 101, 2016, 65--70. 




\bibitem{Rei}
R.~Reilly,
\emph{Variational properties of functions of the mean curvatures for hypersurfaces in space forms},
J. Diff. Geometry 8, 1973, 465--477.



\bibitem{Santalo1}
L.~Santal\'o, 
\emph{Integral geometry and geometric probability}, 2nd edition, CUP, 2004.

\bibitem{Santalo2}
L.~Santal\'o,
\emph{Una relaci\'on entre las curvaturas medias de cuerpos convexos paralelos en espacios de curvatura constante},
Rev. Un. Mat. Argentina 21, 1963, 121--137.


\bibitem{Solanes}
G.~Solanes,
\emph{Integral geometry and the Gauss-Bonnet theorem in constant curvature spaces},
Trans. Amer. Math. Soc. 358, 2006, n.3, 1105--1115.

\bibitem{DSerre}
D.~Serre,
\emph{Matrices: Theory \& Applications Additional exercises},
\url{http://perso.ens-lyon.fr/serre/publi.html}.

\bibitem{ASiffert}
A.~Siffert,
\emph{A new structural approach to isoparametric hypersurfaces in spheres}, 
Ann. Global Anal. Geom. 52 (2017), no. 4, 425--456. 

\bibitem{TangXieYan}
Z.~Tang, Y.Q.~Xie and W.J.~Yan,
\emph{Schoen-Yau-Gromov-Lawson theory and isoparametric foliation},
Comm. Anal. Geom. 20 (2012), 989--1018.

\bibitem{Thor}
G.~Thorbergsson,
\emph{A survey on isoparametric hypersurfaces and their generalizations}, Handbook of differential geometry, Vol. I, 963--995, North-Holland, Amsterdam, 2000.


\end{thebibliography}
\end{document}